\numberwithin{equation}{subsection}
\DeclareMathOperator{\Spec}{Spec}
\DeclareMathOperator{\Mspec}{MaxSpec}
\DeclareMathOperator{\id}{id}
\DeclareMathOperator{\Frac}{Frac}
\DeclareMathOperator{\colim}{colim}
\DeclareMathOperator{\Gal}{Gal}
\theoremstyle{plain}
\theoremstyle{definition}
\theoremstyle{plain}
\newtheorem{Theorem}[subsection]{Theorem}
\newtheorem{Proposition}[subsection]{Proposition}
\newtheorem{Lemma}[subsection]{Lemma}
\newtheorem{Corollary}[subsection]{Corollary}
\newtheorem{Conjecture}[subsection]{Conjecture}
\theoremstyle{definition}
\newtheorem{Definition}[subsection]{Definition}
\newtheorem{Example}[subsection]{Example}
\newtheorem{Notation}[subsection]{Notation}
\newtheorem{Remark}[subsection]{Remark}
\theoremstyle{plain}
\newenvironment{customthm}[1]
  {\innercustomthm}
  {\endinnercustomthm}
\theoremstyle{plain}
\def\@tocline#1#2#3#4#5#6#7{\relax
  \ifnum #1>\c@tocdepth % then omit
  \else
    \par \addpenalty\@secpenalty\addvspace{#2}%
    \begingroup \hyphenpenalty\@M
    \@ifempty{#4}{%
      \@tempdima\csname r@tocindent\number#1\endcsname\relax
    }{%
      \@tempdima#4\relax
    }%
    \parindent\z@ \leftskip#3\relax \advance\leftskip\@tempdima\relax
    \rightskip\@pnumwidth plus4em \parfillskip-\@pnumwidth
    #5\leavevmode\hskip-\@tempdima
      \ifcase #1
       \or\or \hskip 1em \or \hskip 2em \else \hskip 3em \fi%
      #6\nobreak\relax
    \dotfill\hbox to\@pnumwidth{\@tocpagenum{#7}}\par
    \nobreak
    \endgroup
  \fi}
\title{Essential finite generation of extensions of valuation rings}
\author{Rankeya Datta\vspace{-2ex}}
\address{}
\email{}
\thanks{}
\begin{document}

\maketitle

\begin{abstract}
Given a generically finite local extension of valuation rings 
$V \subset W$, the question of whether $W$ is the localization
of a finitely generated $V$-algebra is significant
for approaches to the
problem of local uniformization of valuations using 
ramification theory. Hagen Knaf proposed a characterization of
when $W$ is essentially of finite type over $V$ in terms of classical
invariants of the extension of associated valuations. Knaf's
conjecture has been  
verified in important special cases by
Cutkosky and Novacoski using local uniformization of Abhyankar valuations 
and resolution of singularities of excellent
surfaces in arbitrary characteristic, and by Cutkosky
for valuation rings of function fields of 
characteristic $0$ using embedded resolution of singularities. 
In this paper we prove
 Knaf's conjecture in full generality. %in this paper. 
%by examining base change properties along Henselizations.
\end{abstract}

\section{Introduction}

Let $L/K$ be a finite field extension. Given a domain $R$ with 
fraction field $K$, results that characterize when the integral closure of $R$ in $L$ is a finite type 
$R$-algebra have fundamental applications in
algebraic geometry, commutative algebra and number theory. For instance, finite generation of integral closures
was studied extensively, among others, by Krull, Akizuki, Noether, Zariski, Grothendieck and especially Nagata, resulting in applications to fundamental topics such as rings of algebraic integers and resolution of singularities. We investigate a local
valuative analogue of the finite generation of 
integral closures in this paper. 
Let $\omega$ be a valuation of $L$ with valuation ring 
$(\mathcal{O}_\omega, \mathfrak{m}_\omega, \kappa_\omega)$ and value
group $\Gamma_\omega$. Let $\nu$ 
be the restriction of $\omega$ to $K$ with valuation ring
$(\mathcal{O}_\nu, \mathfrak{m}_\nu, \kappa_\nu)$ and value group
$\Gamma_\nu$. Inclusion induces a local homomorphism
\[
(\mathcal{O}_\nu, \mathfrak{m}_\nu, \kappa_\nu) \hookrightarrow
(\mathcal{O}_\omega, \mathfrak{m}_\omega, \kappa_\omega),
\]
and the valuation ring $\mathcal{O}_\omega$ is a local ring of
the integral closure of $\mathcal{O}_\nu$ in $L$ \cite[Chap.\ VI, $\mathsection 8.6$,
Prop.\ 6]{Bou98}. 
Thus, as a local version of
the question of the finite generation of integral closures, it is natural to ask when 
$\mathcal{O}_\omega$ is the localization of a finite type 
$\mathcal{O}_\nu$-algebra. Knaf 
proposed the following 
necessary and sufficient condition %for when $\mathcal{O}_\omega$
%is essentially of finite type over $\mathcal{O}_\nu$ 
in terms of
classical invariants of the extension $\omega/\nu$
 \cite[Conjecture 1.2]{CN19}.

\begin{Conjecture}
\label{conj:Knaf}
Let $L/K$ be a finite field extension, $\omega$ be a valuation of
$L$ and $\nu$ be the restriction of $\omega$ to $K$. 
%Let 
%$\mathcal{O}_\omega^h$ (resp. $\mathcal{O}_\nu^h$) be the 
%Henselization of the valuation ring $\mathcal{O}_\omega$ of 
%$\omega$
%(resp. $\mathcal{O}_\nu$ of $\nu$) with fraction field $L^h$ 
%(resp. $K^h$). 
Let $L^h$ (resp. $K^h$) denote the fraction field of the 
Henselization of $\mathcal{O}_\omega$ (resp. $\mathcal{O}_\nu$).
Then $\mathcal{O}_\omega$ is essentially of finite
type over $\mathcal{O}_\nu$ if and only if both the following 
conditions are satisfied:
\begin{enumerate}
	\item $[L^h: K^h] = [\Gamma_\omega:\Gamma_\nu]
[\kappa_\omega:\kappa_\nu]$.
	\item $\epsilon(\omega|\nu) = 
[\Gamma_\omega:\Gamma_\nu]$, where $\epsilon(\omega|\nu)$ is the
cardinality of 
$\textrm{$\{x \in \Gamma_{\omega, \geq 0}: x < y$ for all 
$y \in \Gamma_{\nu, > 0}\}$.}$
\end{enumerate}
\end{Conjecture}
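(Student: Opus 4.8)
The plan is to prove the two implications separately; constructing an explicit finite-type model for the sufficiency (``if'') direction is where the real difficulty lies. Write $V=\mathcal{O}_\nu$, $W=\mathcal{O}_\omega$, $e=[\Gamma_\omega:\Gamma_\nu]$ and $f=[\kappa_\omega:\kappa_\nu]$; the fundamental inequality gives $ef\le[L:K]<\infty$, so $e$ and $f$ are finite. Since $V^h$ has the same value group and residue field as $V$ and $\omega$ extends uniquely to $L^h$ with value group $\Gamma_\omega$ and residue field $\kappa_\omega$, condition (1) is equivalent to the assertion that $L^h/K^h$ is \emph{defectless}, i.e.\ $[L^h:K^h]=ef$, while condition (2) involves only the value groups and is insensitive to Henselization. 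I would also record the combinatorial content of (2): distinct elements of $S:=\{x\in\Gamma_{\omega,\ge 0}:x<y\text{ for all }y\in\Gamma_{\nu,>0}\}$ represent distinct cosets of $\Gamma_\nu$ in $\Gamma_\omega$, so $\epsilon(\omega|\nu)=|S|\le e$ always; when $S$ is finite one has $[0,\max S]\cap\Gamma_\omega=S$ and $\mathfrak{m}_\nu W=\{w\in W:\omega(w)>\max S\}$, so $W/\mathfrak{m}_\nu W$ has $W$-length $|S|$, and (2) — which in particular forces $S$ finite — is equivalent to this length being $e$ (equivalently, to every coset of $\Gamma_\nu$ meeting $S$); in particular (2) forces $\sqrt{\mathfrak{m}_\nu W}=\mathfrak{m}_\omega$.

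\emph{Necessity of (1) and (2).} Essential finite type ascends along $V\to V^h$: if $W=B_{\mathfrak q}$ with $B$ a finite-type $V$-subalgebra of $L$ with $\Frac B=L$, then the relevant factor ring of $B\otimes_V V^h$ has $W^h$ as a localization at a prime over $\mathfrak{m}_\omega$. So we may assume $V$ Henselian; then $W$ is Henselian (a local integral extension of a Henselian local ring), $L=L^h$, $K=K^h$, and $[L:K]=efd$ with $d$ the defect. From essential finite type one deduces that $\kappa_\omega/\kappa_\nu$ and $\Gamma_\omega/\Gamma_\nu$ are finite and, crucially, that $d=1$ — the defect being an obstruction to $W$ being a localization of a finite-type $V$-subalgebra; here I would invoke (or reprove, using Ostrowski's lemma and the structure of defect versus defectless extensions) the appropriate ``no defect'' consequence. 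Finally $W/\mathfrak{m}_\nu W=(B/\mathfrak{m}_\nu B)_{\mathfrak q}$ is simultaneously a localization of a finite-type $\kappa_\nu$-algebra and a quotient of a valuation ring; using that a Noetherian valuation ring is a field or a DVR to bound the chain of primes of $W$ containing $\mathfrak{m}_\nu W$, together with a length computation, one obtains $\epsilon(\omega|\nu)=e$, i.e.\ (2).

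\emph{Sufficiency.} Assume (1) and (2). The first reduction is that it suffices to produce \emph{some} finite-type $V$-subalgebra $B\subseteq L$ with $\Frac B=L$ for which $B_{\mathfrak{m}_\omega\cap B}$ is a valuation ring: any valuation subring of $L$ dominated by $W$ and with fraction field $L$ is automatically equal to $W$. I would then reduce to $V$ Henselian (the delicate descent, discussed below) and, using that every valuation is a composite of rank-one valuations and that the hypotheses, the conclusion, and the relevant invariants behave well under such composition, reduce further to $\mathrm{rank}\,\nu\le 1$. With $V$ Henselian and $V\subseteq W$ defectless (by (1)), the Galois theory of valuations applies: factoring $L/K$ into its separable and purely inseparable parts and, on the separable part, through the inertia and ramification subextensions, one is reduced to finitely many atomic cases — residually separable/inertial, tamely ramified, residually purely inseparable of degree $p$, and Artin--Schreier type. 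In each I would exhibit $B$ explicitly, built from (i) lifts to $W$ of a finite generating set of $\kappa_\omega/\kappa_\nu$, (ii) elements of $W$ realizing a generating set of $\Gamma_\omega/\Gamma_\nu$ \emph{chosen inside $S$} — this is precisely where (2) enters, since placing the new values below $\Gamma_{\nu,>0}$ is what keeps the model finite and blocks the ``leakage into higher convex subgroups'' exhibited by the non-example $\mathbb{Z}\oplus\mathbb{Z}\subsetneq\tfrac{1}{2}\mathbb{Z}\oplus\mathbb{Z}$ (lexicographic order) — and (iii) a finite generating set of $L/K$ together with the finitely many relations they satisfy over $V$. That the resulting $B_{\mathfrak q}$ is a valuation ring, rather than a proper subring of $W$, is where defectlessness does its work: it forces $B_{\mathfrak q}$ to realize all of $\Gamma_\omega$ and all of $\kappa_\omega$ with no residual ambiguity, leaving no room for $W$ to be strictly larger.

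\emph{The main obstacle.} I expect two hard points. The first is the descent of essential finite type along $V\to V^h$ — equivalently, along the valuation ring $\mathcal{O}_{\nu'}$ of a sufficiently large finite \emph{separable} subextension $K'/K$ of $K^h/K$, which is a localization of a module-finite $V$-algebra and hence essentially of finite type over $V$. Unlike ascent, this forces one to descend the finite-type model, and the localization implicit in ``essentially of finite type'' means one cannot simply invoke faithfully flat descent of finite-type algebras; one must spread out the model with care, presumably making it equivariant by first passing to a normal closure and then taking a suitable quotient. The second hard point is the explicit construction in the wildly ramified and residually inseparable atomic cases, where realizing $\Gamma_\omega/\Gamma_\nu$ and $\kappa_\omega/\kappa_\nu$ is not by itself enough and one must use defectlessness (through Ostrowski's lemma and the valuation-theoretic Galois correspondence) to pin down the missing relations. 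This is exactly the regime lying beyond the Abhyankar and characteristic-zero cases treated by Cutkosky--Novacoski and Cutkosky, and it is where hypothesis (2) is genuinely needed.
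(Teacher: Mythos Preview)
Your approach diverges substantially from the paper's, and the two ``main obstacles'' you flag are precisely what the paper's argument sidesteps. The paper does \emph{not} reduce to the Henselian case and then descend, does \emph{not} reduce to rank $\le 1$, and does \emph{not} factor through inertia/ramification subextensions or handle atomic cases. Instead it works with the non-Henselian $V$ throughout and uses Henselization only as a detection device. The key input is an approximation lemma: if $A$ is the integral closure of $V$ in $L$ and $\mathfrak{M}$ is the maximal ideal of $A\otimes_V V^h$ lying over $\mathfrak{m}_\omega$, then $(A\otimes_V V^h)_{\mathfrak{M}}\cong W^h$ is the filtered \emph{union} of the Henselian local domains $(B\otimes_V V^h)_{\mathfrak{M}_B}$ as $B$ ranges over finite $V$-subalgebras of $A$ with $\Frac B=L$ that separate the maximal ideals of $A$. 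Conditions (1) and (2) force $W^h$ to be module-finite over $V^h$ (this is the already-known unique-extension case), so some single $B$ already has $(B\otimes_V V^h)_{\mathfrak{M}_B}=W^h$. Then $(B_{\mathfrak{m}_B})^h$ is a valuation ring, and by faithfully flat descent of the valuation-ring property --- a one-line lemma, far easier than descending essential finite type --- $B_{\mathfrak{m}_B}$ is a valuation ring of $L$ dominated by $W$, hence equals $W$. No explicit generators, no ramification filtration, no case analysis.

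Your programme, by contrast, leaves its two acknowledged hard points genuinely open. The descent of essential finite type along $V\to V^h$ is not carried out, and your suggestion of passing to a finite separable $K'/K$ inside $K^h$ and making the model equivariant is a plausible heuristic but not an argument. The reduction to rank $\le 1$ via composition is also unjustified in the direction you need: while $e$, $f$, $\epsilon$ and defect are multiplicative in towers, you would have to show that if $W_{\mathfrak q}$ is essentially of finite type over $V_{\mathfrak p}$ and $W/\mathfrak q$ is essentially of finite type over $V/\mathfrak p$ then $W$ is essentially of finite type over $V$, and this gluing step is nontrivial. Finally, the explicit constructions in the wildly ramified and residually inseparable atomic cases are simply not given. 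So your proposal is a reasonable outline of a harder proof whose crux remains unresolved; the paper's route replaces all of it with one structural base-change lemma plus the maximality of valuation rings under domination.
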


\noindent Here for a totally ordered abelian group $\Gamma$, we define
\[
\textrm{$\Gamma_{\geq 0} \coloneqq \{x \in \Gamma: x \geq 0\}$ and $\Gamma_{> 0}
\coloneqq \{x \in \Gamma: x > 0\}$}. 
\]
In ramification theory, $[\Gamma_\omega:\Gamma_\nu]$ is called the
\emph{ramification index}, 
$[\kappa_\omega:\kappa_\nu]$ the \emph{inertia index} and $\epsilon(\omega|\nu)$ 
the \emph{initial index} of the extension $\omega/\nu$.
The first equality $[L^h: K^h] = [\Gamma_\omega:\Gamma_\nu][\kappa_\omega:\kappa_\nu]$ is the 
assertion that the extension $\omega/\nu$ is \emph{defectless}; see 
Definition \ref{def:defect} for the notion of defect. The second equality
$\epsilon(\omega|\nu) = [\Gamma_\omega:\Gamma_\nu]$ means that
every element of the quotient $\Gamma_\omega/\Gamma_\nu$ is the class of 
some element of $\Gamma_{\omega, \geq 0}$. Indeed, if 
$x_1, x_2$ are distinct elements of 
$\mathcal{S} \coloneqq \{x \in \Gamma_{\omega, \geq 0}: x < y$ for all 
$y \in \Gamma_{\nu, > 0}\}$, then we may assume without loss
of generality that $0 \leq x_1 < x_2$. Consequently, $0 < x_2 - x_1 \leq x_2$, and so, $x_2 - x_1$ cannot be an element of $\Gamma_\nu$ because $x_2$ is strictly smaller than every element
of $\Gamma_{\nu, > 0}$. This shows that every 
element of $\mathcal S$ is the representative of a distinct
class of $\Gamma_\omega/\Gamma_\nu$. Thus, $\epsilon(\omega|\nu) = [\Gamma_\omega:\Gamma_\nu]$ would mean that the classes of the
elements of $\mathcal S$ constitute the whole group $\Gamma_\omega/\Gamma_\nu$.

%To illustrate why Conjecture \ref{conj:Knaf} is reasonable, suppose 
% $\omega/\nu$ is an extension of discrete
%valuations (i.e. the value groups are isomorphic to $\mathbb Z$) such that
%$R_\nu$ is a $\mathbb Q$-algebra. Then 
%$\omega/\nu$ automatically satisfies (2) because $\omega, \nu$ are discrete. Furthermore, since
%$\kappa_\nu$ is a field of characteristic $0$, condition (1) is also
%satisfied by the Lemma of Ostrowski \cite[Cor. to Thm.\ 24, Pg 77]{SZ}.
%The  integral closure $A$ of $R_\nu$ in $L$ is a module-finite $R_\nu$-algebra
%because characteristic $0$ DVRs are excellent
%\cite[\href{https://stacks.math.columbia.edu/tag/07QW}{Tag 07QW}]{stacks-project}.
%Consequently, $R_\omega$, which is a localization of $A$, is essentially of
%finite type over $R_\nu$. 

The question of the essential finite generation of 
extension of valuation rings arises 
naturally in approaches to the open problem of
 local uniformization of valuations using ramification theory. For
example, an affirmative answer to this question for extensions of Abhyankar valuations 
is an important ingredient in Knaf and
Kuhlmann's proof of the local uniformization of Abhyankar 
valuations \cite{KK05}. In addition, the essential finite generation
of extensions of valuation rings also features in Knaf and Kuhlmann's 
valuation-theoretic argument
of the local uniformization of a valuation in a finite extension
of its fraction field \cite{KK09}. Conjecture \ref{conj:Knaf}
can be viewed as a generalization of the beautiful ramification-theoretic characterization of 
the module-finiteness of the integral closure
of a valuation ring in a finite extension of its fraction field 
\cite[Chap.\ VI, $\mathsection 8.5$, Thm.\ 2]{Bou98}.
%Furthermore, as explained 
%in \cite{CN19}, one can think of this question as a local 
%valuation-theoretic analogue of the problem of when the integral
%closure of a domain in a finite extension of its fraction field
%is finitely generated.
%Conjecture \ref{conj:Knaf}
%is useful because 
%One utility of Knaf's conjecture lies in the fact that it 
%provides verifiable
%valuative criteria to help us identify 
%generically finite extension of valuation rings that are essentially
%of finite type. 
%translates the problem of the essential finite generation of 
%extensions of highly non-noetherian rings into one involving more readily verifiable 
%criteria. 
We refer the interested reader to \cite{CN19, Cut19} for 
additional background on this problem. 

Conjecture \ref{conj:Knaf} is known in specific cases, often using different
techniques. The necessity of conditions (1) and (2)
for the essential finite generation of $\mathcal{O}_\omega$ over $\mathcal{O}_\nu$
was proved by Knaf; his argument is reproduced in \cite[Thm.\ 4.1]{CN19} 
(see also Remark \ref{rem:ZMT} for a different
approach using Zariski's Main Theorem). 
The sufficiency of conditions (1) and (2) for the 
essential finite generation of $\mathcal{O}_\omega$ over $\mathcal{O}_\nu$ 
is known when 
\begin{itemize}
 	\item $L/K$ is normal using the transitive action of 
	$\Gal(L/K)$ on the fibers of the integral closure of 
	$\mathcal{O}_\nu$ in $L$ \cite[Cor.\ 2.2]{CN19};

	\item $\kappa_\omega/\kappa_\nu$ is separable using the
	theory of Henselian elements \cite[Thm.\ 1.3]{KN14}; 

	\item $\omega$ is the unique extension of $\nu$ to $L$ using
	the theory of defect \cite[Cor.\ 2.2]{CN19};

	\item $\nu$ is centered on an excellent local two dimensional 
	domain with fraction field $K$ using resolution of 
	singularities for excellent surfaces \cite[Thm.\ 1.4]{CN19};
 
	\item $\nu$ is an Abhyankar valuation of a function field $K/k$	by \cite[Thm.\ 1.5]{CN19} and \cite[Thm.\ 1.7]{Cut20}
	using local uniformization of Abhyankar valuations;
   
	\item $K$ is the function field over a field of
characteristic $0$ using an explicit form of embedded resolution
of singularities \cite[Thm.\ 1.3]{Cut19}.
\end{itemize}

We will give a uniform argument that settles
Conjecture \ref{conj:Knaf} in full generality. Recall that the implication that remains to be 
shown is that if conditions (1) and (2) of 
Conjecture \ref{conj:Knaf} hold, then $\mathcal{O}_\omega$ is
essentially of finite type over $\mathcal{O}_\nu$. We take as our 
starting point the veracity of Conjecture \ref{conj:Knaf} for unique extensions 
of valuations \cite[Cor.\ 2.2]{CN19}. 
Note that
if $\omega^h$ (resp. $\nu^h$) is the valuation of $L^h$ (resp. $K^h$) whose valuation ring
is the Henselization $\mathcal{O}^h_\omega$ (resp. $\mathcal{O}^h_\nu$), then $\omega^h$
is the unique extension of $\nu^h$ to $L^h$ 
up to equivalence of valuations by the Henselian property (see Corollary \ref{cor:Henselian-valuations}). 
Moreover,
Henselizations do not alter value groups 
\cite[\href{https://stacks.math.columbia.edu/tag/0ASK}{Tag 0ASK}]{stacks-project} 
and residue fields, which means that the
ramification, residue and inertia indexes of $\omega^h/\nu^h$
coincide with those of $\omega/\nu$. 
Thus, assuming (1) and (2) in Conjecture \ref{conj:Knaf}, it
follows that $\mathcal{O}_\omega^h$ is essentially of finite type
over $\mathcal{O}_\nu^h$. Using this observation, we prove
Conjecture \ref{conj:Knaf} by establishing the following result.

\begin{Theorem}
\label{thm:main-theorem}
Let $L/K$ be a finite field extension, $\omega$ be a valuation of 
$L$ with valuation ring $(\mathcal{O}_\omega, \mathfrak{m}_\omega, \kappa_\omega)$ 
and value group $\Gamma_\omega$, and $\nu$ be the restriction of $\omega$ to $K$
with valuation ring $(\mathcal{O}_\nu, \mathfrak{m}_\nu, \kappa_\nu)$
and value group $\Gamma_\nu$. 
Let $\mathcal{O}^h_\omega$ (resp. $\mathcal{O}^h_\nu$) be the Henselization of $\mathcal{O}_\omega$ (resp. $\mathcal{O}_\nu$)
with fraction field $L^h$ (resp. $K^h$). 
Then the following are equivalent.
\begin{enumerate}
	\item[(i)] $\mathcal{O}_\omega$ is essentially of finite
	presentation over $\mathcal{O}_\nu$.
	\item[(ii)] $\mathcal{O}_\omega$ is essentially of finite type
	over $\mathcal{O}_\nu$.
	\item[(iii)] $[L^h:K^h] = [\Gamma_\omega:\Gamma_\nu][\kappa_\omega:\kappa_\nu]$ and $\epsilon(\omega|\nu) = [\Gamma_\omega:\Gamma_\nu]$.
	\item[(iv)] $\mathcal{O}^h_\omega$ is essentially of finite
	type over $\mathcal{O}^h_\nu$.
	\item[(v)] $\mathcal{O}^h_\omega$ is a module finite 
	$\mathcal{O}^h_\nu$-algebra.
\end{enumerate}
\end{Theorem}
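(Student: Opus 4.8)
The plan is to prove the cycle of implications $\mathrm{(i)}\Rightarrow\mathrm{(ii)}\Rightarrow\mathrm{(iii)}\Rightarrow\mathrm{(iv)}\Rightarrow\mathrm{(v)}\Rightarrow\mathrm{(i)}$. The first is trivial. For $\mathrm{(ii)}\Rightarrow\mathrm{(iii)}$ I would quote Knaf's proof of the necessity of conditions $(1)$ and $(2)$ of Conjecture~\ref{conj:Knaf}, reproduced in \cite[Thm.~4.1]{CN19} (or the Zariski's Main Theorem argument of Remark~\ref{rem:ZMT}). For $\mathrm{(iii)}\Rightarrow\mathrm{(iv)}$ I would argue as in the introduction: Henselization leaves $\Gamma_\omega,\Gamma_\nu,\kappa_\omega,\kappa_\nu$ and $[L^h:K^h]$ unchanged, so $(1)$ and $(2)$ hold for $\omega^h/\nu^h$; since $\omega^h$ is the unique extension of $\nu^h$ to $L^h$ (Corollary~\ref{cor:Henselian-valuations}), the unique-extension case of Conjecture~\ref{conj:Knaf} \cite[Cor.~2.2]{CN19} gives $\mathrm{(iv)}$. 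For $\mathrm{(iv)}\Rightarrow\mathrm{(v)}$ I would feed $\mathrm{(iv)}$ back into the necessity direction, now for $\omega^h/\nu^h$, to learn that $\omega^h/\nu^h$ is defectless; as $\mathcal{O}_{\omega^h}$ is then the integral closure of the Henselian valuation ring $\mathcal{O}_{\nu^h}$ in $L^h$, the classical ramification-theoretic criterion \cite[Chap.~VI, $\mathsection 8.5$, Thm.~2]{Bou98} shows $\mathcal{O}_{\omega^h}$ is a module-finite $\mathcal{O}_{\nu^h}$-algebra, and, being finitely generated and torsion-free over the valuation ring $\mathcal{O}_{\nu^h}$, it is in fact free (Kaplansky).

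The substance is $\mathrm{(v)}\Rightarrow\mathrm{(i)}$, the descent from the Henselization. I would first reduce to the case that $L/K$ is separable: letting $L_{\mathrm s}$ be the separable closure of $K$ in $L$ and $\omega_{\mathrm s}=\omega|_{L_{\mathrm s}}$, the valuation $\omega$ is the unique extension of $\omega_{\mathrm s}$ to $L$, and multiplicativity of the defect and of the initial index in the tower $\omega/\omega_{\mathrm s}/\nu$ propagates $(1)$ and $(2)$ to both $\omega/\omega_{\mathrm s}$ and $\omega_{\mathrm s}/\nu$; then $\mathcal{O}_\omega$ is module-finite over $\mathcal{O}_{\omega_{\mathrm s}}$ (defect plus \cite{Bou98}), hence essentially of finite presentation over it by freeness, and — granting the separable case for $\omega_{\mathrm s}/\nu$ — composing yields $\mathrm{(i)}$ for $\omega/\nu$. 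So assume $L/K$ separable. Let $C$ be the integral closure of $\mathcal{O}_\nu$ in $L$, so that $\mathcal{O}_\omega=C_{\mathfrak p}$ with $\mathfrak p=\mathfrak m_\omega\cap C$. Since $\mathcal{O}_\nu^h$ is flat over $\mathcal{O}_\nu$ and integral closure commutes with ind-\'etale base change, $C\otimes_{\mathcal{O}_\nu}\mathcal{O}_\nu^h$ is the integral closure of the Henselian ring $\mathcal{O}_\nu^h$ in the \'etale $K^h$-algebra $L\otimes_K K^h$, hence a finite product $\prod_j\mathcal{O}_{\omega_j^h}$ of valuation rings one factor of which is $\mathcal{O}_{\omega^h}$; thus $\mathcal{O}_{\omega^h}=\bigl(C\otimes_{\mathcal{O}_\nu}\mathcal{O}_\nu^h\bigr)_{\mathfrak q}$ for a prime $\mathfrak q$ with $\mathfrak q\cap C=\mathfrak p$, and the idempotent $e\in C\otimes_{\mathcal{O}_\nu}\mathcal{O}_\nu^h$ cutting out this factor is a finite sum $\sum_k c_k\otimes a_k$ with $c_k\in C$.

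Now I would write a finite $\mathcal{O}_{\nu^h}$-module generating set of $\mathcal{O}_{\omega^h}$ as fractions of elements of $C\otimes_{\mathcal{O}_\nu}\mathcal{O}_\nu^h$, collect the finitely many elements of $C$ occurring in these expressions together with the $c_k$ above and with elements of $C$ generating $L$ over $K$, and let $D$ be the $\mathcal{O}_\nu$-subalgebra of $C$ they generate. Then $D$ is module-finite over $\mathcal{O}_\nu$, $\Frac(D)=L$, and by construction $e\in D\otimes_{\mathcal{O}_\nu}\mathcal{O}_\nu^h$, whence $\mathcal{O}_{\omega^h}$ is already a \emph{localization} $\bigl(D\otimes_{\mathcal{O}_\nu}\mathcal{O}_\nu^h\bigr)_{\mathfrak q'}$ at a prime $\mathfrak q'$ with $\mathfrak q'\cap D=\mathfrak p\cap D$. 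Consequently $D_{\mathfrak p\cap D}\to\mathcal{O}_{\omega^h}$ is a flat local homomorphism, hence faithfully flat, and therefore $D_{\mathfrak p\cap D}=\Frac(D)\cap\mathcal{O}_{\omega^h}=L\cap\mathcal{O}_{\omega^h}=\mathcal{O}_\omega$ (using $\omega^h|_L=\omega$). Thus $\mathcal{O}_\omega$ is a localization of the module-finite $\mathcal{O}_\nu$-algebra $D$; since $D$ is free as an $\mathcal{O}_\nu$-module it is finitely presented as an $\mathcal{O}_\nu$-algebra, and $\mathrm{(i)}$ follows. I expect the real obstacle to lie exactly in this descent — in particular in arranging, through the right finite enlargement $D$ of the subalgebra of $C$ dictated by the generators, that $\mathcal{O}_{\omega^h}$ is a localization (and not merely a quotient of a localization) of $D\otimes_{\mathcal{O}_\nu}\mathcal{O}_\nu^h$, for which the reduction to the separable case and the capture of the idempotent $e$ inside $D\otimes_{\mathcal{O}_\nu}\mathcal{O}_\nu^h$ are both essential — together with the repeated reliance on the unique-extension case \cite[Cor.~2.2]{CN19}, itself a nontrivial consequence of the theory of defect.
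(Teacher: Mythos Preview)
Your proof is correct and close in spirit to the paper's, with two differences worth flagging. First, the separable reduction in (v) $\Rightarrow$ (i) is unnecessary: the product decomposition $C\otimes_{\mathcal O_\nu}\mathcal O_\nu^h\cong\prod_j\mathcal O_{\omega_j}^h$ you invoke holds for arbitrary finite $L/K$ (this is exactly Proposition~\ref{prop:decomposition-henselization-base-change}(6)), so your idempotent-capture argument runs unchanged without assuming $L/K$ separable. The reduction itself also leans on the claim that $\epsilon(\omega|\nu)=e(\omega|\nu)$ forces $\epsilon(\omega|\omega_{\mathrm s})=e(\omega|\omega_{\mathrm s})$; this is true but needs an argument you omit (for each $\Gamma_{\omega_{\mathrm s}}$-coset $C$ in $\Gamma_\omega$, the finite set $S(\omega|\nu)\cap C$ has a least element, and minimality forces that element into $S(\omega|\omega_{\mathrm s})$). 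Second, the mechanism for producing the finite subalgebra differs. You capture the idempotent $e$ and module generators of $\mathcal O_\omega^h$ inside $D\otimes\mathcal O_\nu^h$ for a suitable finite $D\subset C$, so that $(D\otimes\mathcal O_\nu^h)_{\mathfrak q'}=\mathcal O_\omega^h$; then faithful flatness of $D_{\mathfrak p\cap D}\to\mathcal O_\omega^h$ gives $D_{\mathfrak p\cap D}=L\cap\mathcal O_\omega^h=\mathcal O_\omega$ directly. The paper instead builds a filtered-union description $\mathcal O_\omega^h=\bigcup_{B\in\Sigma}(B\otimes\mathcal O_\nu^h)_{\mathfrak M_B}$ over finite $B\subset A$ that separate the maximal ideals of $A$ (Corollary~\ref{cor:colimit}), uses finiteness of $\mathcal O_\omega^h$ over $\mathcal O_\nu^h$ to pick $B$ making this an equality, concludes that $(B_{\mathfrak m_B})^h\cong\mathcal O_\omega^h$ is a valuation ring, descends this to $B_{\mathfrak m_B}$ via Lemma~\ref{lem:descent}, and finishes by maximality of valuation rings under domination. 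Your route is shorter once the product decomposition is in hand; the paper's is more structural, isolating the base-change results of Section~\ref{sec:Henselization and base change} as statements of independent interest. (One minor point: \cite[Cor.~2.2]{CN19} already yields module-finiteness in the unique-extension case, so the paper goes (iii) $\Rightarrow$ (v) in one step and your detour through (iv) is not needed.)
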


Instead of using known cases of local uniformization or 
resolution of singularities, we use Knaf's result \cite[Thm.\ 4.1]{CN19}
(see also Remark \ref{rem:ZMT}) to
reduce the proof of Theorem \ref{thm:main-theorem} to showing
that (v) $\Rightarrow$ (i). We then prove this implication 
by analyzing the
behavior of integral maps under base change along Henselizations. 
This consideration turns out to be independent of
valuation theory and is carried out in Section \ref{sec:Henselization and base change}. 
This section is
the heart of our paper and does most of the heavy lifting for the
proof of (v) $\Rightarrow$ (i). The key is the 
approximation result of Corollary \ref{cor:colimit}, which, 
combined the fact that valuation rings are 
maximal subrings of a field with respect to the partial order
induced by domination of local rings, leads to a proof of 
Theorem \ref{thm:main-theorem}.

The paper is structured as follows. In Section \ref{sec:Conventions and basic terminology} 
we fix our conventions
for the paper. Section \ref{sec:Henselization and base change}
examines base change properties along Henselizations. In
Section \ref{sec:valuation-background} we collect some basic 
results and definitions about extensions of valuation rings, Henselian valuation
rings and the ramification theory of extensions of
valuations. Finally, we prove 
Theorem \ref{thm:main-theorem} in Section \ref{sec:proof}.

\noindent \textbf{Acknowledgments:} The author thanks Dale 
Cutkosky for helpful conversations and the referee for their 
helpful suggestions that improved the clarity of the paper.

\section{Conventions and basic terminology}
\label{sec:Conventions and basic terminology}
All rings are commutative with a multiplicative identity. For a ring $A$, $\Mspec(A)$ will denote the set of
maximal ideals of $A$. Note that rings in this paper will rarely
be noetherian.

The term \emph{local ring} will mean a ring $(R, \mathfrak m, \kappa)$ with a unique maximal ideal
$\mathfrak{m}$ and residue field $\kappa$. Please note that local rings
are not necessarily noetherian. We will use $R^h$ to denote the Henselization of the local ring $R$ with respect to 
the maximal ideal $\mathfrak{m}$. Recall that $R^h$ is a faithfully
flat local extension of $R$ whose maximal ideal is the expansion
of the maximal ideal of $R$ and whose residue field is isomorphic
to the residue field of $R$ 
\cite[\href{https://stacks.math.columbia.edu/tag/07QM}{Tag 07QM}]{stacks-project}. 
We will say that a local
ring $(B, \mathfrak{m}_B)$ \emph{dominates} a local ring $(A, \mathfrak{m}_A)$ if $A \subset B$ and $\mathfrak{m}_A = \mathfrak{m}_B \cap A$. Recall that a valuation ring of a field 
$K$ is a local subring of $K$
that is maximal in the collection of local subrings of $K$ 
under the partial order induced by domination of local rings.
Whenever we talk about an extension of valuation rings $V \subset W$, 
we will
always assume $W$ dominates $V$.

Valuation rings also arise from valuations, which are denoted additively in this paper. We assume the reader is familiar with
valuations and valuation rings, and we will skip their 
definitions. A good introduction to valuation theory is 
\cite[Chap.\ VI]{Bou98}.

Let $B$ be an $A$-algebra. We say $B$ is 
\emph{essentially of finite type over $A$} if $B$ is the 
localization of a finite type $A$-algebra. We say $B$ is
\emph{essentially of finite presentation over $A$} if $B$ is the
localization of a finitely presented $A$-algebra.

While a finitely presented algebra is always of finite type, the converse is not true in a non-noetherian setting. For example, if $k$
is a field and $A = k[x_n \colon n \in \mathbf{N}]$ is the
polynomial ring in infinitely many variables over $k$, then
the canonical map $A \twoheadrightarrow k$ obtained by killing
all the variables is of finite type but
not of finite presentation because the ideal $(x_n \colon n \in \mathbf{N})$ is not finitely generated 
\cite[\href{https://stacks.math.columbia.edu/tag/00R2}{Tag 00R2}]{stacks-project}. However, finite generation and finite presentation 
often coincide in the valuative setting because of the 
following result and the fact that any torsion-free module over a
valuation ring is free \cite[Chap.\ VI, $\mathsection 3.6$, Lem.\ 1]{Bou98}.

\begin{Lemma}\cite[Cor.\ (3.4.7)]{RG71}
\label{lem:finite-pres}
Let $A$ be a domain and $B$ be a finite type $A$-algebra. 
If $B$ is $A$-flat, then $B$ is a finitely presented $A$-algebra.
%A finite type $A$-algebra $B$ is finitely presented as an $A$-algebra in each of the following cases:
%\begin{enumerate}
	%\item  If $B$ is a finitely presented $A$-module.
	%\item  If $A$ is a domain and $B$ is a flat $A$-algebra.
%\end{enumerate}
\end{Lemma}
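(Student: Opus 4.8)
The plan is to reduce to the Noetherian case by a standard limit argument, the flatness hypothesis being exactly what makes the approximation sharp. Since $B$ is of finite type, fix a presentation $B = A[x_1,\dots,x_n]/I$, and abbreviate $A[x] := A[x_1,\dots,x_n]$; the goal is to prove that $I$ is a finitely generated ideal of $A[x]$. First I would record two consequences of the flatness of $B$ over $A$. From the exact sequence $0 \to I \to A[x] \to B \to 0$, in which the outer terms are $A$-flat, one gets $\mathrm{Tor}_i^A(I,M) \cong \mathrm{Tor}_{i+1}^A(B,M) = 0$ for all $i \ge 1$ and all $A$-modules $M$, so $I$ is itself a flat $A$-module; and from the same sequence, $I \otimes_A M \to A[x] \otimes_A M$ is injective for every $M$.

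Next, write $A = \colim_\lambda A_\lambda$ as the filtered union of the subrings $A_\lambda \subseteq A$ that are finitely generated over the prime ring; each $A_\lambda$ is a Noetherian domain. Put $I_\lambda := I \cap A_\lambda[x]$ and $B_\lambda := A_\lambda[x]/I_\lambda$. Since $A_\lambda$ is Noetherian, $B_\lambda$ is of finite presentation over $A_\lambda$; the canonical maps $B_\lambda \to B_\mu$ (for $\lambda \le \mu$) and $B_\lambda \to B$ are injective; and $B = \colim_\lambda B_\lambda$. Writing $J_\lambda := I_\lambda A[x] \subseteq I$ and $C_\lambda := A[x]/J_\lambda = B_\lambda \otimes_{A_\lambda} A$, we obtain a filtered system of finitely presented $A$-algebras $C_\lambda$ with surjective transition maps and $\colim_\lambda C_\lambda = B$. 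It therefore suffices to show that the surjection $C_\lambda \to B$ is an isomorphism --- equivalently $J_\lambda = I$ --- for $\lambda$ sufficiently large, since then $B$ is the base change along $A_\lambda \to A$ of the finitely presented $A_\lambda$-algebra $B_\lambda$.

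The key reduction is the claim that $C_\lambda$ is $A$-flat if and only if $J_\lambda = I$. Indeed, since $K[x]$ is Noetherian, where $K := \Frac(A)$, once $\lambda$ is large enough the elements of $I_\lambda$ include a finite generating set of the extended ideal $IK[x] \subseteq K[x]$; because $A$ is a domain and $B$ is $A$-flat (hence torsion-free) this gives $(I/J_\lambda)\otimes_A K = IK[x]/J_\lambda K[x] = 0$, so $N_\lambda := I/J_\lambda$ is a torsion $A$-module. On the other hand, using that $I$ is $A$-flat together with the injectivity of $I \otimes_A M \to A[x]\otimes_A M$, a short diagram chase in the sequences $0 \to J_\lambda \to A[x] \to C_\lambda \to 0$ and $0 \to J_\lambda \to I \to N_\lambda \to 0$ yields a natural isomorphism $\mathrm{Tor}_1^A(C_\lambda,M) \cong \mathrm{Tor}_1^A(N_\lambda,M)$ for all $M$. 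Hence $C_\lambda$ is $A$-flat iff $N_\lambda$ is $A$-flat; but over a domain an $A$-flat module is torsion-free, and a module that is both torsion and torsion-free is zero, so $N_\lambda = 0$, i.e. $J_\lambda = I$. (The converse implication is the hypothesis: if $J_\lambda = I$ then $C_\lambda = B$ is $A$-flat.)

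It remains to produce a single $\lambda$ for which $C_\lambda$ is $A$-flat. Here I would first localize to assume $A$ is a local domain, since finite presentation of $B$ over $A$ can be checked locally on $\Spec A$; then it is enough to make $C_\lambda$ flat at the closed point, i.e. to arrange that $B_\lambda$ is $A_\lambda$-flat at the point $\mathfrak{m}\cap A_\lambda$. By generic flatness, $B_\lambda$ is $A_\lambda$-flat over a dense open of $\Spec A_\lambda$; the content is to upgrade this using the flatness of the colimit $B$ over $A$ --- concretely, the vanishing of $\mathrm{Tor}_1^A(B, A/\mathfrak{m})$ and its compatibility with $B = \colim_\lambda C_\lambda$, combined with the local criterion of flatness applied to the essentially-finite-type Noetherian-local maps $(A_\lambda)_{\mathfrak{m}\cap A_\lambda} \to (B_\lambda)_{\mathfrak n}$ --- to conclude that flatness already holds at a finite stage for $\lambda$ large. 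I expect this last step, the descent of flatness from the colimit down to a finite stage of the approximating system (whose members are \emph{not} related to one another by base change), to be the main obstacle; it is precisely the point at which the flattening techniques of Raynaud and Gruson \cite{RG71} do the real work.
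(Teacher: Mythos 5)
The paper gives no proof of this lemma; it is imported verbatim from Raynaud and Gruson \cite[Cor.\ (3.4.7)]{RG71}, so there is no in-paper argument for your proposal to be measured against. Evaluated on its own, the preliminary reductions in your write-up are correct: $I$ is $A$-flat; $I\otimes_A M\to A[x]\otimes_A M$ is injective for all $M$ because $B$ is $A$-flat; the Noetherian approximation $A=\colim_\lambda A_\lambda$ and the resulting filtered system of finitely presented $A$-algebras $C_\lambda = B_\lambda\otimes_{A_\lambda}A$ with $\colim_\lambda C_\lambda = B$; the identification $\mathrm{Tor}_1^A(C_\lambda,M)\cong\mathrm{Tor}_1^A(N_\lambda,M)$; and the torsion argument showing that, once $\lambda$ is large enough that $J_\lambda K[x]=IK[x]$, flatness of $C_\lambda$ forces $N_\lambda = I/J_\lambda = 0$. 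So you have correctly reduced the lemma to: \emph{exhibit one $\lambda$ for which $C_\lambda$ is $A$-flat.}

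That reduction, however, is not a proof, and the remaining step is the entire content of the theorem, not a technicality. You cannot reach it by the local criterion of flatness as sketched, because $\mathrm{Tor}_1^A(B,\kappa(\frak m))=\colim_\lambda\mathrm{Tor}_1^A(C_\lambda,\kappa(\frak m))$ vanishes while the $C_\lambda$ are \emph{not} base changes of one another: the transition maps need not be injective on $\mathrm{Tor}_1$, and a filtered colimit of nonzero modules can be zero with no term vanishing. Generic flatness only gives flatness of $C_\lambda$ on a dense open of $\Spec A$ that depends on $\lambda$ and need never reach the closed point. Overcoming exactly this obstruction is what Raynaud--Gruson's d\'evissage and flattening-by-blowup machinery does (or, in the Stacks Project treatment around Tag 053E, what their replacement argument does). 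You have honestly flagged that you do not know how to close this gap; until it is closed, the argument is an accurate map of where a proof must go rather than a proof.
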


\section{Henselization and base change}
\label{sec:Henselization and base change}

In this section we establish some base change properties of integral
maps along Henselizations. The results do not use any valuation
theory. The non-valuative considerations of this
section will provide the main ingredients for the proof of Theorem 
\ref{thm:main-theorem}. We will frequently use the fact that the 
Henselization of a local ring is flat
\cite[\href{https://stacks.math.columbia.edu/tag/07QM}{Tag 07QM}]{stacks-project}, that flat maps satisfy 
the Going-Down property 
\cite[\href{https://stacks.math.columbia.edu/tag/00HS}{Tag 00HS}]{stacks-project} and that the property of being an 
integral ring map is preserved under base change 
\cite[\href{https://stacks.math.columbia.edu/tag/02JK}{Tag 02JK}]{stacks-project}.

We first recall a characterization of Henselian
local domains that  will be important for the results that follow.

\begin{Lemma}
\label{lem:Nagata-hensel-domains}
Let $(R, \mathfrak m)$ be a local domain. The following are equivalent:
\begin{enumerate}
	\item $R$ is Henselian.
	\item For every integral extension $R \hookrightarrow A$, if 	$A$ is a domain then
	$A$ is a local ring.
\end{enumerate}
If the equivalent conditions hold, then any integral extension of
$R$ that is also a domain is Henselian.
\end{Lemma}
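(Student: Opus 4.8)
The plan is to reduce everything to the standard list of equivalent characterizations of Henselian local rings \cite[\href{https://stacks.math.columbia.edu/tag/04GE}{Tag 04GE}]{stacks-project} — in particular, that a module-finite algebra over a Henselian local ring is a finite product of local rings, and that Henselianity is equivalent to the liftability of simple roots of monic polynomials. For $(1) \Rightarrow (2)$, I would take an integral extension $R \hookrightarrow A$ with $A$ a domain and write $A = \bigcup_\lambda A_\lambda$ as the filtered union of its module-finite $R$-subalgebras (each generated over $R$ by finitely many elements integral over $R$). Each $A_\lambda$ is a domain and module-finite over the Henselian ring $R$, hence a finite product of local rings; having no nontrivial idempotents, $A_\lambda$ is local. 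For maximal ideals $\mathfrak{n}, \mathfrak{n}'$ of $A$, the contractions $\mathfrak{n} \cap A_\lambda$ and $\mathfrak{n}' \cap A_\lambda$ are maximal ideals of $A_\lambda$ (contraction of a maximal ideal along the integral extension $A_\lambda \hookrightarrow A$ is maximal), hence both equal $\mathfrak{m}_{A_\lambda}$; taking the union over $\lambda$ gives $\mathfrak{n} = \mathfrak{n}'$, so $A$ is local.

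For $(2) \Rightarrow (1)$ it suffices to produce, for each monic $f \in R[T]$ and each simple root $a_0 \in \kappa$ of $\overline{f}$, a root $a \in R$ of $f$ with $\overline{a} = a_0$. Translating $T$ by a lift of $a_0$, we may assume $a_0 = 0$; the case $\deg f = 1$ is immediate, so suppose $\deg f = n \geq 2$ and write $\overline{f} = T\overline{h}$ with $\overline{h}(0) \neq 0$, so that $\gcd(T, \overline{h}) = (1)$ in $\kappa[T]$. Put $A := R[T]/(f)$, which is $R$-free, and let $\mathfrak{q}_0$ be the maximal ideal of $A$ over $\mathfrak{m}$ corresponding, under the Chinese Remainder isomorphism $A/\mathfrak{m}A \cong \kappa \times \kappa[T]/(\overline{h})$, to the factor $\kappa$; then $T \in \mathfrak{q}_0$ and $A/\mathfrak{q}_0 = \kappa$. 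Choose a minimal prime $\mathfrak{p}_0 \subseteq \mathfrak{q}_0$ of $A$; since $A$ is $R$-free, its minimal primes contract to $(0)$ in $R$, so $B := A/\mathfrak{p}_0$ is a domain with $R \hookrightarrow B$ integral, and by $(2)$ it is local, with maximal ideal $\mathfrak{q}_0/\mathfrak{p}_0$ and residue field $\kappa$. The key step is that $B \otimes_R \kappa = B/\mathfrak{m}B$, being a quotient of the local ring $B$, is local; on the other hand it is a quotient of $\kappa \times \kappa[T]/(\overline{h})$ by an ideal contained in $0 \times \kappa[T]/(\overline{h})$, hence isomorphic to $\kappa \times C$ for some ring $C$, which is local only when $C = 0$. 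Therefore $B/\mathfrak{m}B = \kappa$, so $B = R + \mathfrak{m}B$, and Nakayama applied to the finite $R$-module $B/R$ forces $B = R$. The image of $T$ in $B = R$ is then a root of $f$ lying in $\mathfrak{m}$, as required, so $R$ is Henselian.

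For the final assertion, assume the equivalent conditions hold and let $R \hookrightarrow A$ be an integral extension with $A$ a domain. Then $A$ is local by $(2)$; and for any integral extension $A \hookrightarrow C$ with $C$ a domain, $R \hookrightarrow C$ is integral by transitivity of integrality, so $C$ is local by $(2)$ applied to $R$. Thus $A$ satisfies condition $(2)$, and applying the implication $(2) \Rightarrow (1)$ to $A$ shows $A$ is Henselian. I expect $(2) \Rightarrow (1)$ to be the main obstacle: one must extract the polynomial-root form of Henselianity from a hypothesis that only speaks about $\Spec$ of arbitrary integral domain extensions, and the crux is the combination of the choice of the minimal prime $\mathfrak{p}_0$ inside the distinguished maximal ideal $\mathfrak{q}_0$ with the collapse of the closed fibre $B \otimes_R \kappa$ onto $\kappa$.
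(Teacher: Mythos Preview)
Your argument is correct in all three parts. The paper, by contrast, does not actually prove this lemma: its ``Indication of proof'' simply cites Nagata \cite[Chap.\ VII, Thm.\ (43.12) and Cor.\ (43.13)]{Nag75} for both the equivalence and the final assertion. So you have supplied a genuine self-contained proof where the paper only points to the literature.

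A few remarks on the comparison. Your $(1)\Rightarrow(2)$ via the filtered union of module-finite subalgebras and the product decomposition of finite algebras over a Henselian local ring is the standard reduction and is essentially how Nagata argues as well. Your $(2)\Rightarrow(1)$ is the interesting direction: by passing to $A=R[T]/(f)$, choosing a minimal prime inside the distinguished maximal ideal, and then using locality of $B=A/\mathfrak{p}_0$ to force the closed fibre $B\otimes_R\kappa$ to collapse to $\kappa$, you recover the simple-root-lifting form of Henselianity directly. This is clean and avoids any appeal to decomposition/inertia groups or to idempotent-lifting in completions. The Nakayama step is legitimate because $B$ is a quotient of the finite free $R$-module $A$, so $B/R$ is finitely generated. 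Your proof of the final assertion---bootstrapping Henselianity of $A$ from condition~(2) for $R$ via transitivity of integrality and then reapplying $(2)\Rightarrow(1)$ to $A$---is slicker than invoking a separate corollary, since it reuses the equivalence you just proved. What your approach buys is complete independence from Nagata's text; what the paper's citation buys is brevity.
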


\begin{proof}[Indication of proof]
The equivalence follows from 
\cite[Chap.\ VII, Thm.\ (43.12)]{Nag75}. The fact that integral 
extension domains of $R$ 
are Henselian follows by \cite[Chap.\ VII, Cor.\ (43.13)]{Nag75}. 
Note that in Nagata's terminology, an integral extension of a domain 
is automatically a domain \cite[Chap.\ I, Pg.\ 30]{Nag75}.
\end{proof}

The next result highlights a key base change property along 
Henselizations.

\begin{Lemma}
\label{lem:Henselization-base-change}
Let $(R, \mathfrak m)$ be a local ring, $\varphi: R \rightarrow A$ be a ring map and
$i: R \rightarrow R^h$ be the canonical map from $R$ to its Henselization $R^h$.
Suppose $\mathfrak P \in \Spec(A)$ contracts to $\mathfrak m \in \Spec(R)$, that is,
$\varphi^{-1}(\mathfrak{P})= \mathfrak{m}$. Consider the induced map $\id_A \otimes i: A \rightarrow A \otimes_R R^h$.
\begin{enumerate}
	\item The fiber of $\Spec(\id_A \otimes i): \Spec(A \otimes_R R^h) \rightarrow \Spec(A)$
	 over $\mathfrak P$ is a singleton.
	\item If $\mathfrak Q$ is the unique prime ideal of $A \otimes_R R^h$ that contracts to $\mathfrak 		P$,
	then $((A \otimes_R R^h)_{\mathfrak Q})^h \cong (A_{\mathfrak P})^h$.
\end{enumerate}
\end{Lemma}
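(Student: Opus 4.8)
The plan is to obtain (1) from a direct computation of the fiber, and (2) by transporting the presentation of $R^h$ as a filtered colimit of \'etale $R$-algebras across the base change along $\varphi$. For (1), the fiber of $\Spec(\id_A \otimes i)$ over $\mathfrak P$ is $\Spec\big((A \otimes_R R^h) \otimes_A \kappa(\mathfrak P)\big) = \Spec\big(R^h \otimes_R \kappa(\mathfrak P)\big)$. Since $\varphi^{-1}(\mathfrak P) = \mathfrak m$, the structure homomorphism $R \to \kappa(\mathfrak P)$ factors through $\kappa(\mathfrak m) = R/\mathfrak m$, so
\[
R^h \otimes_R \kappa(\mathfrak P) \;=\; \big(R^h/\mathfrak m R^h\big) \otimes_{\kappa(\mathfrak m)} \kappa(\mathfrak P) \;=\; \kappa(\mathfrak m) \otimes_{\kappa(\mathfrak m)} \kappa(\mathfrak P) \;=\; \kappa(\mathfrak P),
\]
using that $\mathfrak m R^h$ is the maximal ideal of $R^h$ and that $R^h$ has residue field $\kappa(\mathfrak m)$. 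Thus the fiber is a single point; the same computation records that $\kappa(\mathfrak Q) \cong \kappa(\mathfrak P)$, and, since $\mathfrak m R^h$ is the only prime of $R^h$ lying over $\mathfrak m$, that $\mathfrak Q$ contracts to $\mathfrak m R^h$ in $R^h$.

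For (2), write $R^h = \colim_\lambda S_\lambda$ as a filtered colimit of \'etale $R$-algebras, each carrying a prime $\mathfrak q_\lambda$ over $\mathfrak m$ with $\kappa(\mathfrak q_\lambda) = \kappa(\mathfrak m)$, compatibly with the transition maps and in such a way that $\mathfrak m R^h$ contracts to $\mathfrak q_\lambda$ in $S_\lambda$. Base changing along $\varphi$ and localizing, $(A \otimes_R R^h)_{\mathfrak Q} = \colim_\lambda (A \otimes_R S_\lambda)_{\mathfrak Q_\lambda}$, with each $A \otimes_R S_\lambda$ \'etale over $A$ and $\mathfrak Q_\lambda := \mathfrak Q \cap (A \otimes_R S_\lambda)$ lying over $\mathfrak P$ in $A$ and over $\mathfrak q_\lambda$ in $S_\lambda$ (the latter by the contraction established in (1)). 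The primes of $A \otimes_R S_\lambda$ lying over both $\mathfrak P$ and $\mathfrak q_\lambda$ make up $\Spec\big(\kappa(\mathfrak q_\lambda) \otimes_R \kappa(\mathfrak P)\big) = \Spec\big(\kappa(\mathfrak m) \otimes_{\kappa(\mathfrak m)} \kappa(\mathfrak P)\big) = \Spec\kappa(\mathfrak P)$, so $\kappa(\mathfrak Q_\lambda) \cong \kappa(\mathfrak P)$. Hence each $D_\lambda := (A \otimes_R S_\lambda)_{\mathfrak Q_\lambda}$ is the localization of an \'etale $A_{\mathfrak P}$-algebra at a prime with trivial residue extension (an \'etale-local neighborhood of $A_{\mathfrak P}$), the transition maps $D_\lambda \to D_{\lambda'}$ respect these primes, and $(A \otimes_R R^h)_{\mathfrak Q} = \colim_\lambda D_\lambda$ displays $(A \otimes_R R^h)_{\mathfrak Q}$ as a filtered colimit of \'etale-local neighborhoods of $A_{\mathfrak P}$.

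To finish, recall that $(A_{\mathfrak P})^h$ is itself the filtered colimit of all \'etale-local neighborhoods of $A_{\mathfrak P}$. The functor $\lambda \mapsto D_\lambda$ into that diagram induces a local $A_{\mathfrak P}$-algebra homomorphism $(A \otimes_R R^h)_{\mathfrak Q} \to (A_{\mathfrak P})^h$; since $(A_{\mathfrak P})^h$ is Henselian, the universal property of the Henselization extends it to a local homomorphism $\Phi : \big((A \otimes_R R^h)_{\mathfrak Q}\big)^h \to (A_{\mathfrak P})^h$, while the universal property of $(A_{\mathfrak P})^h$, applied to the local homomorphism $A_{\mathfrak P} \to \big((A \otimes_R R^h)_{\mathfrak Q}\big)^h$ into a Henselian ring, produces a local homomorphism $\Psi$ in the reverse direction. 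One then checks $\Phi \circ \Psi = \id$ and $\Psi \circ \Phi = \id$ from the uniqueness clauses of these two universal properties, the crux being that an \'etale-local neighborhood of $A_{\mathfrak P}$ admits a unique local $A_{\mathfrak P}$-homomorphism to any Henselian local ring, so the relevant composites of structure maps are forced to coincide. This gives $\big((A \otimes_R R^h)_{\mathfrak Q}\big)^h \cong (A_{\mathfrak P})^h$.

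I expect the only genuinely delicate point to be this last step: pinning down the precise colimit presentation of $R^h$ with its distinguished primes so that the $D_\lambda$ really are \'etale-local neighborhoods of $A_{\mathfrak P}$, and then verifying that $\Phi$ and $\Psi$ are mutually inverse via the uniqueness built into the Henselization. Everything else follows formally from flatness of the Henselization, stability of \'etale ring maps under base change, the facts that $R^h$ has residue field $\kappa(\mathfrak m)$ and maximal ideal $\mathfrak m R^h$, and the fiber computation of (1).
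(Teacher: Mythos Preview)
Your argument for (1) is exactly the paper's: both compute the fiber as $\Spec\big(\kappa(\mathfrak P)\otimes_R R^h\big)$ and collapse it to $\Spec\kappa(\mathfrak P)$ using $\mathfrak m R^h=\mathfrak m^h$ and $\kappa(\mathfrak m^h)=\kappa(\mathfrak m)$.

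For (2) the approaches diverge. The paper observes that $\mathfrak Q$ contracts to $\mathfrak m^h$ along $\varphi\otimes\id_{R^h}$ and then simply invokes \cite[\href{https://stacks.math.columbia.edu/tag/08HU}{Tag 08HU}]{stacks-project}, which is precisely the statement that Henselization commutes with this kind of base change. You instead unpack that citation: you write $R^h=\colim_\lambda S_\lambda$ as a filtered colimit of \'etale neighborhoods of $(R,\mathfrak m)$, base change to exhibit $(A\otimes_R R^h)_{\mathfrak Q}$ as a filtered colimit of \'etale neighborhoods $D_\lambda$ of $(A_{\mathfrak P},\mathfrak P A_{\mathfrak P})$, and then build mutually inverse maps $\Phi,\Psi$ from the universal property of Henselization. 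This is correct and is essentially how Tag~08HU is proved in the Stacks Project; the residue-field computation you do for $\mathfrak Q_\lambda$ is the content of the auxiliary Tag~05WP used there. The trade-off is clear: the paper's proof is a one-line citation, while yours is self-contained but must handle the bookkeeping you flag at the end (that the $D_\lambda$ really form a cofinal system of \'etale neighborhoods and that uniqueness forces $\Phi\circ\Psi$ and $\Psi\circ\Phi$ to be identities). None of that bookkeeping is problematic, so your route is sound; it just reproves a result the paper is content to quote.
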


\begin{proof}
Consider the commutative diagram
\begin{equation}
\label{diag:base-change}
\begin{tikzcd}
A \arrow[r, "\id_A \otimes i"]
&A \otimes_R R^h\\
R \arrow[r, "i"] \arrow[swap]{u}{\varphi}
& R^h \arrow[swap]{u}{\varphi \otimes \id_{R^h}}.
\end{tikzcd}
\end{equation}
Since $i$ is faithfully flat, so is $\id_A \otimes i$.
Therefore,
$\Spec(\id_A \otimes i): \Spec(A \otimes_R R^h) \rightarrow \Spec(A)$
is surjective.

\noindent (1) Let
$\kappa(\mathfrak P)$ (resp. $\kappa(\mathfrak m)$) denote the residue field of 
$\mathfrak P$ (resp. $\mathfrak m$). Then the fiber of $\Spec(\id_A \otimes i)$ over $\mathfrak P$
can be identified with $\Spec(\kappa(\mathfrak P) \otimes_A (A \otimes_R R^h))$.
Now,
\[
\kappa(\mathfrak P) \otimes_A (A \otimes_R R^h) \cong \kappa(\mathfrak P) \otimes_R R^h
\cong \kappa(\mathfrak P) \otimes_{\kappa(\mathfrak m)} (\kappa(\mathfrak m) \otimes_R R^h).
\]
The maximal ideal $\mathfrak{m}^h$ of $R^h$ is $\mathfrak{m}R^h$  and 
the induced map of residue fields $\kappa(\mathfrak m) \rightarrow \kappa(\mathfrak{m}^h)$ 
is an isomorphism \cite[\href{https://stacks.math.columbia.edu/tag/07QM}{Tag 07QM}]{stacks-project}. 
Therefore $\kappa(\mathfrak m) \otimes_R R^h = \kappa(\mathfrak{m}^h)$ and 
\[
\kappa(\mathfrak P) \otimes_{\kappa(\mathfrak m)} (\kappa(\mathfrak m) \otimes_R R^h) \cong \kappa(\mathfrak P),
\]
that is, $\Spec(\kappa(\mathfrak P) \otimes_A (A \otimes_R R^h))$ is the spectrum of a field.
This proves (1).

\noindent (2) There exists a unique prime ideal $\mathfrak Q$ of $A \otimes_R R^h$ that contracts
to $\mathfrak P$ by (1). By the commutativity of (\ref{diag:base-change}), 
$\mathfrak Q$ contracts to $\mathfrak{m}^h$ along 
$\varphi \otimes \id_{R^h}: R^h \rightarrow A \otimes_R R^h$ because $\mathfrak{m}^h$ is
the unique prime ideal of $R^h$ that contracts $\mathfrak m$ in $R$. 
The rest of (2) now follows from 
\cite[\href{https://stacks.math.columbia.edu/tag/08HU}{Tag 08HU}]{stacks-project}. 
\end{proof}

We now focus on the base change properties of integral
ring maps along Henselizations. When we use the term 
`integral ring map', we do not necessarily mean an integral extension. For instance, a surjective ring map is 
an integral map but
not an integral extension.
%However, our main application of the base change properties 
%in this paper will be to integral maps that are also extensions. %When
%we want the integral map to be an extension, we will say so.

\begin{Lemma}
\label{lem:Henselization-base-change-integral}
Let $(R, \mathfrak m)$ be a local ring, $\varphi: R \rightarrow A$ be an integral
ring map and $i: R \rightarrow R^h$
be the canonical map from $(R, \mathfrak m)$ to its Henselization $(R^h, \mathfrak{m}^h)$. 
Then the map 
\[
\Spec(\id_A \otimes i): \Spec(A \otimes_R R^h) \rightarrow \Spec(A)
\]
induced by $\id_A \otimes i: A \rightarrow A \otimes_R R^h$ has the following properties:
\begin{enumerate}
	\item For every maximal ideal $\mathfrak M$ of $A$, the fiber of $\Spec(\id_A \otimes i)$
	over $\mathfrak M$ is a singleton.
	\item Let $\mathfrak Q \in \Spec(A \otimes_R R^h)$. The following are equivalent:
	\begin{enumerate}
	\item[(2a)] $\mathfrak Q$ is a maximal ideal of $A \otimes_R R^h$.
	\item[(2b)] $\mathfrak Q$ contracts to a maximal ideal
	of $\Spec(A)$.
	\item[(2c)] $\mathfrak Q$ contracts to $\mathfrak{m}^h$.
	\end{enumerate}
	\item $\Spec(\id_A \otimes i)$ induces a bijection 
	$\Mspec(A \otimes_R R^h) \longleftrightarrow \Mspec(A)$.
\end{enumerate}
\end{Lemma}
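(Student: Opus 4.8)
The plan is to deduce all three parts from the preceding lemmas together with one elementary fact about integral ring maps: if $\varphi\colon R\to A$ is integral and $R$ is local with maximal ideal $\frak m$, then a prime $\frak P$ of $A$ is maximal if and only if it contracts to $\frak m$. Indeed, for integral ring maps lying-over and incomparability hold, so a prime of $A$ is maximal exactly when it lies over a maximal ideal of $R$, and $R$ being local forces that maximal ideal to be $\frak m$. I will invoke this freely for both $A$ and $A\otimes_R R^h$, the latter being an integral $R^h$-algebra since integrality is preserved under base change.

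First I would prove (1). Let $\frak M$ be a maximal ideal of $A$. By the fact just recalled, $\frak M$ contracts to $\frak m$ in $R$, i.e. $\varphi^{-1}(\frak M)=\frak m$. Hence Lemma~\ref{lem:Henselization-base-change}(1) applies verbatim with $\frak P=\frak M$, and the fiber of $\Spec(\id_A\otimes i)$ over $\frak M$ is a singleton. Next I would establish the equivalences in (2). For (2b)$\Leftrightarrow$(2c): a maximal ideal $\frak P$ of $A$ contracts to $\frak m$ in $R$ (again by the integrality fact), and $\frak m^h$ is the unique prime of $R^h$ lying over $\frak m$; conversely $\frak m^h$ contracts to $\frak m$, so by the commutativity of diagram~\eqref{diag:base-change} a prime $\frak Q$ of $A\otimes_R R^h$ contracts to a maximal ideal of $A$ iff it contracts to $\frak m$ in $R$ iff it contracts to $\frak m^h$ in $R^h$ (the last equivalence using that $\frak m^h$ is the only prime over $\frak m$). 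For (2a)$\Leftrightarrow$(2c): apply the integrality fact to the integral local-ring map $R^h\to A\otimes_R R^h$, which says exactly that $\frak Q$ is maximal in $A\otimes_R R^h$ iff it contracts to $\frak m^h$.

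Finally, (3): the map $\Spec(\id_A\otimes i)$ sends $\Mspec(A\otimes_R R^h)$ into $\Mspec(A)$ because, by (2), a maximal ideal $\frak Q$ contracts to a maximal ideal of $A$. Conversely, by (1) each maximal ideal $\frak M$ of $A$ has exactly one preimage $\frak Q$, and that $\frak Q$ contracts to the maximal ideal $\frak M$, hence lies in $\Mspec(A\otimes_R R^h)$ by (2b)$\Rightarrow$(2a). Thus the two assignments $\frak M\mapsto\frak Q$ and $\frak Q\mapsto\frak Q\cap A$ are mutually inverse, giving the asserted bijection. I do not anticipate a serious obstacle here; the only point requiring care is the interplay in part (2) between the three contraction conditions, which is handled cleanly once one observes that $\frak m^h$ is the unique prime of $R^h$ over $\frak m$ and that surjectivity of $\Spec(\id_A\otimes i)$ (from faithful flatness of $i$) guarantees every prime of $A$ actually has a preimage.
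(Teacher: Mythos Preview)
Your proof is correct and follows essentially the same approach as the paper's: both reduce part (1) to Lemma~\ref{lem:Henselization-base-change}(1) via the integrality fact that maximal ideals of $A$ contract to $\frak m$, handle part (2) by exploiting integrality of $\varphi\otimes\id_{R^h}$ together with the uniqueness of $\frak m^h$ over $\frak m$, and derive (3) from (1) and (2). The only cosmetic difference is that the paper proves (2) as a cycle $(2\mathrm{a})\Rightarrow(2\mathrm{c})\Rightarrow(2\mathrm{b})\Rightarrow(2\mathrm{a})$ while you split it into the two equivalences $(2\mathrm{b})\Leftrightarrow(2\mathrm{c})$ and $(2\mathrm{a})\Leftrightarrow(2\mathrm{c})$.
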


\begin{proof}
Consider the commutative diagram
\[
\begin{tikzcd}%[every arrow/.append style={swap}]
A \arrow[r, "\id_A \otimes i"]
&A \otimes_R R^h\\
R \arrow[r, "i"] \arrow[swap]{u}{\varphi}
& R^h \arrow[swap]{u}{\varphi \otimes \id_{R^h}}.
\end{tikzcd}
\]
Since $\varphi$ is integral, so is $\varphi \otimes \id_{R^h}$.
Moreover, 
$\Spec(\id_A \otimes i): \Spec(A \otimes_R R^h) \rightarrow \Spec(A)$
is surjective because $\id_A \otimes i$ is faithfully flat by base change.

\noindent (1) $\varphi^{-1}(\mathfrak M) = \mathfrak m$ since $\varphi$ is integral and $\mathfrak M$
is maximal. Then (1) follows 
by part (1) of Lemma \ref{lem:Henselization-base-change}.

\noindent (2) Suppose $\mathfrak Q$ is a maximal ideal of $A \otimes_R R^h$. Then
$\mathfrak Q$ contracts to $\mathfrak{m}^h$ in $R^h$ along $\varphi \otimes \id_{R^h}$ because this map
is integral. Thus, (2a) $\Rightarrow$ (2c). 

Suppose $\mathfrak Q$ contracts to $\mathfrak{m}^h$ in $R^h$, and hence to $\mathfrak{m}$ in $R$. 
Then the
contraction $\mathfrak{Q}^c$ of $\mathfrak Q$ to $A$ must be maximal because $\mathfrak{Q}^c$ contracts to $\mathfrak m$
along the integral map $\varphi$, and only maximal ideals can contract to maximal ideals
along integral maps. This proves (2c) $\Rightarrow$ (2b).

Finally, suppose $\mathfrak Q$ contracts to a maximal ideal $\mathfrak M$ of $A$. Since 
$\varphi^{-1}(\mathfrak M) = \mathfrak m,$ 
it follows by the commutativity of the above diagram that $\mathfrak Q$ 
contracts to $\mathfrak{m}^h$ along the integral ring map
$\varphi \otimes \id_{R^h}$. %Again, since 
%only maximal ideals contract to maximal ideals along integral
%ring maps, 
Then $\mathfrak Q$ must be maximal, thereby establishing (2b) $\Rightarrow$ (2a).
%Conversely, if $\mathfrak Q$ is maximal, then
%it contracts to $\mathfrak{m}^h$ by integrality of $\varphi \otimes \id_{R^h}$. Consequently, the
%contraction $\mathfrak{Q}^c$ of $\mathfrak Q$ to $A$ must be maximal because $\mathfrak{Q}^c$ contracts to $\mathfrak m$
%along the integral map $\varphi$.

\noindent (3) The equivalent statements of part (2) tell us that the inverse image of $\Mspec(A)$ under 
$\Spec(\id_A \otimes i)$ is precisely $\Mspec(A \otimes_R R^h)$, and part (1)
shows that the induced map $\Mspec(A \otimes_R R^h) \rightarrow \Mspec(A)$
is both injective and surjective.
\end{proof}

An analogue of Lemma \ref{lem:Henselization-base-change-integral} exists for minimal primes.

\begin{Lemma}
\label{lem:minimal-primes-base-change-Henselization}
Let $(R,\mathfrak m)$ be an integrally closed domain, and $\varphi: R \hookrightarrow A$ be an integral extension of domains. Then we 
have the following:
\begin{enumerate}
	\item $R^h$ is an integrally closed domain
	and $\Frac(R) \otimes_R R^h = \Frac(R^h)$.
	\item Let $\mathfrak Q \in \Spec(A \otimes_R R^h)$.
	The following are equivalent:
	\begin{enumerate}
	\item[(2a)] $\mathfrak Q$ is a minimal prime of $A \otimes_R R^h$. 
	\item[(2b)] $\mathfrak Q$ lies over $(0)$ in $A$.
	\item[(2c)] $\mathfrak Q$ lies over $(0)$ in $R^h$.
	\end{enumerate} 
	\item There is a bijection $\{$minimal prime of $A \otimes_R R^h \} \longleftrightarrow
	 \Spec(\Frac(A) \otimes_{\Frac(R)} \Frac(R^h))$.
	\item If $\Frac(A)$ is a finite extension of $\Frac(R)$, then $A \otimes_R R^h$
	has finitely many minimal primes.
\end{enumerate}
\end{Lemma}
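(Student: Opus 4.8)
The plan is to reduce parts (2)--(4) to part (1), whose proof is the only substantial step, together with the elementary observation that a tensor product of two algebraic field extensions of a common base field is zero-dimensional (it is integral over either factor). For part (1): the canonical map $R \to R^h$ is a filtered colimit of étale $R$-algebras; over the integrally closed domain $R$ each such algebra is a finite product of integrally closed domains (its generic fibre, being étale over $\Frac(R)$, is a finite product of fields, which bounds the number of minimal primes), and $R^h$ is the colimit of the factors containing the distinguished prime over $\frak{m}$, with injective transition maps since a nonzero flat algebra over a domain is torsion-free. A filtered colimit of integrally closed domains along injective maps is an integrally closed domain, so $R^h$ is one; alternatively one may invoke the corresponding permanence property of Henselizations directly. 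For $\Frac(R)\otimes_R R^h = \Frac(R^h)$: since $R \hookrightarrow R^h$ and $R^h$ is a domain, $\Frac(R)\otimes_R R^h$ is the localization $(R\setminus\{0\})^{-1}R^h$ inside $\Frac(R^h)$, and every nonzero $b \in R^h$, lying in some étale $R$-subalgebra, is algebraic over $\Frac(R)$; clearing denominators in its minimal equation gives $c \in R^h$ with $bc \in R\setminus\{0\}$, so every nonzero element of $R^h$ is already a unit in that localization, whence the localization equals $\Frac(R^h)$.

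For part (2), both $R \hookrightarrow A$ (an integral extension of domains) and $R \hookrightarrow R^h$ (with $R^h$ a domain by part (1)) have the property that $(0)$ is the unique prime lying over $(0) \in \Spec(R)$. Chasing the commutative square relating $A$, $R^h$, and $A \otimes_R R^h$ over $R$ then yields (2b) $\Leftrightarrow$ (2c): a prime of $A \otimes_R R^h$ lying over $(0)$ in $A$ (resp.\ in $R^h$) lies over $(0)$ in $R$, hence over $(0)$ in $R^h$ (resp.\ in $A$). Next, $A \to A \otimes_R R^h$ is flat (base change of $i$) and so satisfies Going-Down, hence a minimal prime of $A \otimes_R R^h$ contracts to a minimal prime of the domain $A$, i.e.\ to $(0)$; this is (2a) $\Rightarrow$ (2b). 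For (2b) $\Rightarrow$ (2a), identify the fibre of $\Spec(A \otimes_R R^h) \to \Spec(A)$ over $(0)$ with $\Spec(\Frac(A) \otimes_R R^h) = \Spec(\Frac(A) \otimes_{\Frac(R)} \Frac(R^h))$, using part (1). Since $A$ is integral over $R$, $\Frac(A)$ is an algebraic extension of $\Frac(R)$, so $\Frac(A) \otimes_{\Frac(R)} \Frac(R^h)$ is integral over the field $\Frac(R^h)$ and hence zero-dimensional; thus any prime of $A \otimes_R R^h$ lying over $(0) \in \Spec(A)$ is minimal in this fibre, and since any prime of $A \otimes_R R^h$ contained in it again contracts to $(0)$ in $A$, it is minimal in $A \otimes_R R^h$.

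Parts (3) and (4) then follow formally: by part (2) the minimal primes of $A \otimes_R R^h$ are exactly the points of the fibre over $(0) \in \Spec(A)$, i.e.\ $\Spec(\Frac(A) \otimes_{\Frac(R)} \Frac(R^h))$, which is (3); and if $\Frac(A)/\Frac(R)$ is finite, this fibre ring is a finite, hence Artinian, $\Frac(R^h)$-algebra, so it has only finitely many primes (at most $[\Frac(A):\Frac(R)]$), giving (4). I expect the main obstacle to be part (1): verifying that the Henselization of an integrally closed domain is again an integrally closed domain and pinning down its fraction field, especially without Noetherian hypotheses. Once part (1) is in hand, parts (2)--(4) amount to a short diagram chase together with the zero-dimensionality of a tensor product of algebraic field extensions.
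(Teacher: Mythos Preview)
Your proposal is correct and follows essentially the same approach as the paper: part (1) via the permanence of integral closedness under Henselization plus algebraicity of $\Frac(R^h)/\Frac(R)$, part (2) via flat Going-Down for (2a) $\Rightarrow$ (2b), the commutative square and uniqueness of primes over $(0)$ for relating (2b) and (2c), and zero-dimensionality of fibres of an integral map for minimality, and parts (3)--(4) from identifying the minimal primes with the generic fibre. The only cosmetic difference is that the paper argues in the cycle (2a) $\Rightarrow$ (2b) $\Rightarrow$ (2c) $\Rightarrow$ (2a), using the integral map $R^h \to A \otimes_R R^h$ for the last step, whereas you prove (2b) $\Leftrightarrow$ (2c) symmetrically and close the loop with (2b) $\Rightarrow$ (2a) via the fibre over $(0) \in \Spec(A)$; both invoke the same zero-dimensionality principle, just on different legs of the square.
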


\begin{proof}
(1) That $R^h$ is an integrally closed domain is a well-known permanence property of Henselization; see 
\cite[\href{https://stacks.math.columbia.edu/tag/06DI}{Tag 06DI}]{stacks-project}.
Since $R^h$ is a colimit of local \'etale extensions, 
$\Frac(R^h)$ is an algebraic extension of $\Frac(R)$.
So $\Frac(R) \otimes_R R^h$ is a field because it contains $\Frac(R)$ and is contained
in $\Frac(R^h)$.  
Since $\Frac(R) \otimes_R R^h$ is a localization of $R^h$, we get $\Frac(R) \otimes_R R^h = \Frac(R^h)$.
%contains $\Frac(R)$ and is contained in $\Frac(R^h)$,
%we see that $\Frac(R) \otimes_R R^h$ is a field. At the same time, 
%$\Frac(R) \otimes_R R^h$ is a localization of $R^h$. Therefore, $\Frac(R) \otimes_R R^h = \Frac(R^h)$.

\noindent (2) Let $i: R \rightarrow R^h$ be the canonical map. 
Consider the commutative diagram
\[
\begin{tikzcd}%[every arrow/.append style={swap}]
A \arrow[r, "\id_A \otimes i"]
&A \otimes_R R^h\\
R \arrow[r, "i"] \arrow[swap]{u}{\varphi}
& R^h \arrow[swap]{u}{\varphi \otimes \id_{R^h}}.
\end{tikzcd}
\]
Since $\id_A \otimes i: A \rightarrow A \otimes_R R^h$ is flat, %it satisfies Going-Down.
%Then 
a minimal prime of $A \otimes_R R^h$ must contract to the unique minimal prime $(0)$ of $A$
by Going-Down. 
This proves (2a) $\Rightarrow$ (2b).

Suppose $\mathfrak Q$ contracts to $(0)$ in $A$. Then $\mathfrak Q$ must contract to 
$(0)$ in $R$ because
$\varphi$ is injective.
%and the zero ideal of $A$ contracts to the zero ideal of $R$.
Let
\[
\mathfrak p \coloneqq (\varphi \otimes \id_{R^h})^{-1}(\mathfrak Q).
\]
By the commutativity of the above diagram, $\mathfrak p$ contracts to $(0)$ in $R$. %, that is, 
%$\mathfrak p$ lies in the generic fiber of $i: R \rightarrow R^h$. 
But (1) shows the generic fiber of $i$ is a singleton, consisting
of the unique minimal prime $(0)$ of $R^h$. 
Consequently, $\mathfrak p = (0)$, proving (2b) $\Rightarrow$ (2c).

Assume (2c). If $\mathfrak Q$ is not a minimal prime of $A \otimes_R R^h$, then we can find
 $\mathfrak{Q}' \in \Spec(A \otimes_R R^h)$ such that
\[
\mathfrak{Q}' \subsetneq \mathfrak{Q}.
\]
Then $(\varphi \otimes \id_{R^h})^{-1}(\mathfrak{Q}') = (0)$, which is a contradiction
because $\varphi \otimes \id_{R^h}$ is an integral extension, and integral extensions have
zero dimensional fibers 
\cite[\href{https://stacks.math.columbia.edu/tag/00GT}{Tag 00GT}]{stacks-project}. 
Thus, (2c) $\Rightarrow$ (2a).

\noindent (3) By part (2), the set of minimal primes of $A \otimes_R R^h$ is precisely the
generic fiber of $\id_A \otimes i: A \rightarrow A \otimes_R R^h$, which is in bijection with 
$\Spec(\Frac(A) \otimes_A (A \otimes_R R^h))$. The assertion now follows because
\[
\Frac(A) \otimes_A (A \otimes_R R^h) \cong \Frac(A) \otimes_{\Frac(R)} (\Frac(R) \otimes_R R^h)
\cong \Frac(A) \otimes_{\Frac(R)} \Frac(R^h),
\]
where the last isomorphism is a consequence of part (1).

\noindent (4) If $\Frac(A)$ is a finite extension of $\Frac(R)$, then 
$\Frac(A) \otimes_{\Frac(R)} \Frac(R^h)$ is a finite 
$\Frac(R^h)$-algebra.
Consequently, %$\Frac(A) \otimes_{\Frac(R)} \Frac(R^h)$ is a zero-dimensional noetherian
%ring, and so, 
$\Spec(\Frac(A) \otimes_{\Frac(R)} \Frac(R^h))$ is a finite set. We are then done 
by part (3).
\end{proof}

The next result is well-known. We include a proof for the 
reader's convenience.

\begin{Lemma}
\label{lem:direct-product-domains}
Let $A$ be a ring such that for all maximal ideals $\mathfrak m$ of $A$, $A_{\mathfrak m}$ is a domain.
%Then we have the following:
\begin{enumerate}
	\item If $\mathfrak p$ and $\mathfrak q$ are distinct minimal primes of $A$, then 
	$\mathfrak p + \mathfrak q = A$. %Said differently, the irreducible components of $\Spec(A)$ are 
	%disjont.
	\item If $A$ has finitely many distinct minimal primes $\mathfrak{p}_1, \dots, \mathfrak{p}_n$, then the
	canonical map $A \rightarrow A/\mathfrak{p}_1 \times \dots \times A/\mathfrak{p}_n$ is an
	isomorphism.
\end{enumerate}
\end{Lemma}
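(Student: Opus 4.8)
The plan is to prove (1) by a localization argument and then derive (2) from (1) via the Chinese Remainder Theorem.

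For (1) I would argue by contradiction. If $\frak{p} + \frak{q} \neq A$, choose a maximal ideal $\frak{m} \supseteq \frak{p} + \frak{q}$, so that $\frak{p} \subseteq \frak{m}$ and $\frak{q} \subseteq \frak{m}$. Localizing at $\frak{m}$, the primes $\frak{p}$ and $\frak{q}$ extend to prime ideals $\frak{p}A_{\frak{m}}$ and $\frak{q}A_{\frak{m}}$ of $A_{\frak{m}}$ (primes of $A_{\frak{m}}$ correspond to primes of $A$ contained in $\frak{m}$), and these are minimal primes of $A_{\frak{m}}$: any prime of $A_{\frak{m}}$ sitting strictly below $\frak{p}A_{\frak{m}}$ would contract to a prime of $A$ sitting strictly below $\frak{p}$, contradicting the minimality of $\frak{p}$, and likewise for $\frak{q}$. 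But $A_{\frak{m}}$ is a domain by hypothesis, hence $(0)$ is its unique minimal prime, forcing $\frak{p}A_{\frak{m}} = (0) = \frak{q}A_{\frak{m}}$. Contracting back to $A$ --- where the contraction of $\frak{p}A_{\frak{m}}$ recovers $\frak{p}$ precisely because $\frak{p} \subseteq \frak{m}$, and similarly for $\frak{q}$ --- we obtain $\frak{p} = \frak{q}$, contradicting that they are distinct. Hence $\frak{p} + \frak{q} = A$.

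For (2) I would first note that $A$ is reduced: reducedness is a local property, and each $A_{\frak{m}}$ is a domain, hence reduced. Therefore $\bigcap_{i=1}^n \frak{p}_i$, being the nilradical of $A$, equals $(0)$. By (1), the ideals $\frak{p}_1, \dots, \frak{p}_n$ are pairwise comaximal, so the Chinese Remainder Theorem shows that the canonical map $A = A/\bigcap_{i=1}^n \frak{p}_i \to \prod_{i=1}^n A/\frak{p}_i$ is an isomorphism, which is exactly the claim.

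The argument is essentially formal, so I do not expect a genuine obstacle; the only points that merit a moment's care are, in (1), that both the minimality of the extended primes and the recovery of $\frak{p},\frak{q}$ under contraction hinge on the inclusions $\frak{p},\frak{q} \subseteq \frak{m}$, and, in (2), the standard facts that a ring all of whose localizations at maximal ideals are reduced is itself reduced and that pairwise comaximality of finitely many ideals is precisely what powers the Chinese Remainder isomorphism.
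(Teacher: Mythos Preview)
Your argument is correct and matches the paper's proof essentially verbatim: part (1) is handled by localizing at a maximal ideal containing both primes to produce two distinct minimal primes in a domain, and part (2) follows from reducedness plus the Chinese Remainder Theorem. The only difference is that you spell out the routine details (minimality of the extended primes, recovery under contraction, locality of reducedness) that the paper leaves implicit.
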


\begin{proof}
(1) Suppose for contradiction that $\mathfrak p + \mathfrak q \subsetneq A$. 
Then there exists a maximal ideal $\mathfrak m$
of $A$ such that $\mathfrak p + \mathfrak q \subseteq \mathfrak m$. Now both $\mathfrak{p}A_{\mathfrak m}$ and
$\mathfrak{q}A_{\mathfrak m}$ are distinct minimal prime ideals of the domain $A_{\mathfrak m}$, which is impossible.

\noindent (2) %For any $\mathfrak p \in \Spec(A)$, $A_{\mathfrak p}$ is a further localization
%of $A_{\mathfrak m}$ for some maximal ideal $\mathfrak m$. Therefore all the local rings of $A$
%are domains, and consequently, 
The hypothesis implies that $A$ is reduced, that is,
$\mathfrak{p}_1 \cap \dots \cap \mathfrak{p}_n = (0).$
Since $\mathfrak{p}_i + \mathfrak{p}_j = A$ for $i \neq j$, the result now follows by the Chinese
Remainder Theorem
\cite[\href{https://stacks.math.columbia.edu/tag/00DT}{Tag 00DT}]{stacks-project}.
\end{proof}

\begin{Proposition}
\label{prop:decomposition-henselization-base-change}
Let $(R, \mathfrak m)$ be a local domain that is integrally closed in its 
fraction field $K$. Let $L$ be a finite field extension of $K$ and
let $A$ be the integral closure of $R$ in $L$. 
%Let $i: (R, \mathfrak m) \rightarrow (R^h, \mathfrak{m}^h)$
%denote the map from $R$ to its Henselization.
We have the following:
\begin{enumerate}
	\item $A$ has finitely many maximal ideals, that is, $A$ is semi-local.
	\item $A \otimes_R R^h$ is a semi-local ring.
	\item If $\mathfrak M$ is a maximal ideal of $A \otimes_R R^h$, then 
	$(A \otimes_R R^h)_{\mathfrak M}$ is an integrally closed domain.	
	\item $A \otimes_R R^h$ has finitely many minimal primes.
	\item %If $\mathfrak p$ is a minimal prime of $A \otimes_R R^h$, 
	%then there exists a unique maximal ideal $\mathfrak M$ of $A \otimes_R R^h$
	%that contains $\mathfrak p$. 
	Each maximal ideal $\mathfrak M$ of $A \otimes_R R^h$ contains
	a unique minimal prime $\mathfrak p$, and conversely, 
	%of $A \otimes_R R^h$ that is
	%contained in $\mathfrak M$. Conversely, 
	$\mathfrak M$ is the unique
	maximal ideal that contains $\mathfrak p$.
	Moreover, the canonical map 
	$A \otimes_R R^h \rightarrow (A\otimes_R R^h)_{\mathfrak M}$ has kernel
	$\mathfrak p$ and induces an isomorphism
	\[
	\frac{A \otimes_R R^h}{\mathfrak{p}} \cong (A \otimes_R R^h)_{\mathfrak M}.
	\]
	Consequently, $(A \otimes_R R^h)_{\mathfrak M}$ is Henselian.
	\item If $\mathfrak{m}_1, \dots, \mathfrak{m_n}$ are the maximal ideals of $A$ (assumed to be distinct), then
	\[
	A \otimes_R R^h \cong (A_{\mathfrak{m}_1})^h \times \dots \times (A_{\mathfrak{m}_n})^h.
	\]
	Moreover, if $\mathfrak{M}_i$ is the unique prime ideal of $A \otimes_R R^h$ that
	contracts to $\mathfrak{m}_i$, then $\mathfrak{M}_i$ is maximal and 
	\[
	(A \otimes_R R^h)_{\mathfrak{M}_i} \cong (A_{\mathfrak{m}_i})^h.
	\]
	\item The sets $\Mspec(A), \Mspec(A \otimes_R R^h), \Spec(L \otimes_K \Frac(R^h))$
	and $\{$minimal prime of $A \otimes_R R^h \}$ have the same cardinality.
\end{enumerate}	
\end{Proposition}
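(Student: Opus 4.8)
The plan is to prove the seven assertions essentially in the order listed, since each relies on the previous ones together with the structural lemmas already established in this section.

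First, for (1), I would invoke the classical fact that the integral closure of a normal local domain in a finite field extension is semi-local; this follows because the maximal ideals of $A$ lie over $\frak m$ and correspond to the finitely many extensions of the associated valuation-theoretic or prime-avoidance data — concretely, $\Mspec(A)$ injects into $\Spec(L \otimes_K \kappa(\frak m))$ via going-up/lying-over for the integral extension $R/\frak m \hookrightarrow A/\frak m A$ (after noting $\frak m A \subseteq$ every maximal ideal), and the latter is a finite set because $L \otimes_K \kappa(\frak m)$ is a finite $\kappa(\frak m)$-algebra. For (2), I apply Lemma \ref{lem:Henselization-base-change-integral}(3): since $R \hookrightarrow A$ is integral, $\Mspec(A \otimes_R R^h)$ is in bijection with $\Mspec(A)$, which is finite by (1). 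For (3), I use Lemma \ref{lem:Henselization-base-change}(2): for a maximal ideal $\frak M$ of $A\otimes_R R^h$ lying over some $\frak m_i \in \Mspec(A)$, we have $((A\otimes_R R^h)_{\frak M})^h \cong (A_{\frak m_i})^h$; but $A_{\frak m_i}$ is a normal local domain (localization of the integrally closed domain $A$), hence so is its Henselization by Lemma \ref{lem:minimal-primes-base-change-Henselization}(1), and a local ring whose Henselization is a normal domain is itself a normal domain (normality and being a domain both descend along the faithfully flat local map $R \to R^h$). For (4), I apply Lemma \ref{lem:minimal-primes-base-change-Henselization}(4) with the observation that $\Frac(A) = L$ is finite over $K = \Frac(R)$.

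Next, (5) is where the components genuinely assemble, and I expect this to be the main obstacle — or at least the point requiring the most care. Each localization $(A\otimes_R R^h)_{\frak M}$ is a domain by (3), so $\frak M$ contains a unique minimal prime $\frak p$, namely the kernel of $A\otimes_R R^h \to (A\otimes_R R^h)_{\frak M}$; I need to check this kernel really is minimal, which follows from Lemma \ref{lem:minimal-primes-base-change-Henselization}(2) once I verify that the contraction of $\frak M$ to $R^h$ is $\frak m^h$ (true by Lemma \ref{lem:Henselization-base-change-integral}(2c)) and track the minimal primes through the diagram — in fact the minimal primes of $A\otimes_R R^h$ contained in $\frak M$ correspond to minimal primes of the local ring $(A\otimes_R R^h)_{\frak M}$, of which there is exactly one. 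For the converse, that $\frak p$ sits inside a unique maximal ideal: if $\frak p \subseteq \frak M$ and $\frak p \subseteq \frak M'$ then, using that $A \otimes_R R^h \to \prod (A\otimes_R R^h)/\frak p_i$ is an isomorphism by Lemma \ref{lem:direct-product-domains}(2) (applicable since all localizations at maximal ideals are domains by (3) and there are finitely many minimal primes by (4)), the ring decomposes as a finite product of domains $(A\otimes_R R^h)/\frak p_i$, each of which is local — because each is an integral extension domain of the Henselian domain $R^h$, hence local and Henselian by Lemma \ref{lem:Nagata-hensel-domains}. So $\frak M = \frak M'$, and the stated isomorphism $(A\otimes_R R^h)/\frak p \cong (A\otimes_R R^h)_{\frak M}$ is exactly the projection to the corresponding factor; Henselianity of this factor is again Lemma \ref{lem:Nagata-hensel-domains}.

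Finally, (6) follows by matching the two descriptions: the product decomposition $A\otimes_R R^h \cong \prod_i (A\otimes_R R^h)/\frak p_i \cong \prod_i (A\otimes_R R^h)_{\frak M_i}$ from (5), combined with $(A\otimes_R R^h)_{\frak M_i} \cong (A_{\frak m_i})^h$ from Lemma \ref{lem:Henselization-base-change}(2) (here $\frak M_i$ is the unique prime over $\frak m_i$, which is maximal by Lemma \ref{lem:Henselization-base-change-integral}(2b)), gives $A \otimes_R R^h \cong \prod_i (A_{\frak m_i})^h$; I should also note $\frak M_i \ne \frak M_j$ for $i \ne j$ since they contract to distinct $\frak m_i$. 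And (7) is then a bookkeeping statement: $\Mspec(A) \leftrightarrow \Mspec(A\otimes_R R^h)$ by Lemma \ref{lem:Henselization-base-change-integral}(3); $\Mspec(A\otimes_R R^h) \leftrightarrow \{\text{minimal primes of } A\otimes_R R^h\}$ by the bijection $\frak M \leftrightarrow \frak p$ of (5); and $\{\text{minimal primes of } A\otimes_R R^h\} \leftrightarrow \Spec(L\otimes_K \Frac(R^h))$ by Lemma \ref{lem:minimal-primes-base-change-Henselization}(3) together with (1) of that lemma identifying $\Frac(R)\otimes_R R^h = \Frac(R^h)$. I would close by remarking that all four sets are finite, their common cardinality being the number of extensions of the valuation — though strictly only finiteness and the bijections are needed here.
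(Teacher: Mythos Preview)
Your proposal is correct and follows essentially the same route as the paper, invoking Lemmas \ref{lem:Nagata-hensel-domains}--\ref{lem:direct-product-domains} in the same logical order and with the same key moves (descent of normality along $R \to R^h$ for (3), the product decomposition via Lemma \ref{lem:direct-product-domains} plus Lemma \ref{lem:Nagata-hensel-domains} for (5), and the bijection bookkeeping for (7)). Two small slips to tidy: in (1) your sketch is garbled --- $L \otimes_K \kappa(\frak m)$ is not defined since $\kappa(\frak m)$ is not a $K$-algebra, and $A/\frak m A$ need not be finite over $\kappa(\frak m)$ as $A$ is only integral (the paper simply cites Bourbaki for finite fibers) --- and in (6), Lemma \ref{lem:Henselization-base-change}(2) only gives $((A\otimes_R R^h)_{\frak M_i})^h \cong (A_{\frak m_i})^h$, so you must first invoke the Henselianity established in (5) to drop the outer $(-)^h$, exactly as the paper does.
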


\begin{proof}
(1) The integral closure of
an integrally closed domain in a finite extension of its fraction field has finite fibers \cite[Chap.\ V, $\mathsection 2.3$, Cor.\ 2]{Bou98}. Since $\Mspec(A)$ is 
the closed fiber of $R \subset A$, it is finite.

\noindent (2) $\Mspec(A\otimes_R R^h)$
is in bijection with the finite set $\Mspec(A)$ by Lemma \ref{lem:Henselization-base-change-integral}.
%,
%and the latter set is finite by part (1).

\noindent (3) %Let $\mathfrak P \in \Spec(A)$ be the contraction of 
$\mathfrak M$ contracts to a maximal ideal $\mathfrak P$ in $A$ %. Note $\mathfrak P \in \Mspec(A)$ 
 by Lemma 
\ref{lem:Henselization-base-change-integral}, and so, $\mathfrak P$ 
contracts to $\mathfrak m$ in $R$.  Since
$A_{\mathfrak P}$ is integrally closed, $(A_{\mathfrak P})^h$, is
also an integrally closed domain 
\cite[\href{https://stacks.math.columbia.edu/tag/06DI}{Tag 06DI}]{stacks-project}.
By Lemma \ref{lem:Henselization-base-change}, 
\[
(A_{\mathfrak{P}})^h \cong ((A \otimes_R R^h)_{\mathfrak M})^h.
\]
%Thus, the latter ring is an integrally closed domain. 
Since %the canonical map 
$(A \otimes_R R^h)_{\mathfrak M} \rightarrow ((A \otimes_R R^h)_{\mathfrak M})^h$
is faithfully flat,  %faithfully flat 
descent of integral closedness \cite[\href{https://stacks.math.columbia.edu/tag/033G}{Tag 033G}]{stacks-project}
implies $(A \otimes_R R^h)_{\mathfrak M}$ is an integrally closed domain.

\noindent (4) This follows from part (4) of Lemma \ref{lem:minimal-primes-base-change-Henselization} because $L$ is the 
fraction field of $A$.

\noindent (5) 
%Each minimal prime of 
%$A \otimes_R R^h$ is contained in a unique maximal ideal by part (3).
A maximal ideal $\mathfrak{M} \in A \otimes_R R^h$ contains a unique 
minimal prime because $(A \otimes_R R^h)_{\mathfrak M}$ is a domain
by part (3).
Let $\mathfrak{p}_1, \dots, \mathfrak{p}_k$ be the minimal primes of
$A \otimes_R R^h$. Then part (3) and Lemma \ref{lem:direct-product-domains} imply that
\begin{equation}
\label{eq:finite-product-Henselian}
A \otimes_R R^h \cong (A \otimes_R R^h)/\mathfrak{p}_1 \times \dots \times (A \otimes_R R^h)/\mathfrak{p}_k.
\end{equation}
%Since $R$ is an integrally closed domain, 
Lemma \ref{lem:minimal-primes-base-change-Henselization} 
shows that a minimal prime of $A \otimes_R R^h$ contracts to $(0)$ in $R^h$. 
Thus, the composition
\[
R^h \xrightarrow{\varphi \otimes \id_{R^h}} A \otimes_R R^h \twoheadrightarrow (A \otimes_R R^h)/\mathfrak{p}_i
\]
is an integral extension for all $i = 1, \dots, k$, and so, 
%A domain that is 
%an integral extension of a Henselian local domain is also Henselian local
%(Lemma \ref{lem:Nagata-hensel-domains}). 
each $(A \otimes_R R^h)/\mathfrak{p}_i$
is a Henselian local domain by Lemma \ref{lem:Nagata-hensel-domains}.
Hence each minimal
prime $\mathfrak{p}_i$ is contained in a unique maximal ideal, say $\mathfrak{M}_i$.
%, and so,
%$A \otimes_R R^h$ is a finite product of Henselian local domains. 
%The fact that each 
%minimal prime $\mathfrak{p}_i$ of $A \otimes_R R^h$ is contained in
%Let $\mathfrak{M}_i$ be the unique maximal ideal of $A \otimes_R R^h$ that contains $\mathfrak{p}_i$.
Since $(A \otimes_R R^h)_{\mathfrak{M}_i}$ is a domain by part (3), 
%the correspondence between prime ideals of $(A \otimes_R R^h)_{\mathfrak{M}_i}$
%and prime ideals of $A \otimes_R R^h$ implies that 
the kernel of
$A \otimes_R R^h \rightarrow  (A \otimes_R R^h)_{\mathfrak{M}_i}$ has to be
$\mathfrak{p}_i$. Moreover, $(A \otimes_R R^h)/\mathfrak{p}_i$ is local with
maximal ideal $\mathfrak{M}_i/\mathfrak{p}_i$, so the induced injection
$(A \otimes_R R^h)/\mathfrak{p}_i \hookrightarrow (A \otimes_R R^h)_{\mathfrak{M}_i}$
is an isomorphism. In particular,
$(A \otimes_R R^h)_{\mathfrak{M}_i}$ 
is a Henselian local domain. 
%As $\Mspec(A \otimes_R R^h) = 
%\{\mathfrak{M}_1,\dots,\mathfrak{M}_k\}$ by the correspondence between
%maximal ideals and minimal primes, this finishes the proof of (5).

\noindent (6) The uniqueness and maximality of $\mathfrak{M}_i$
follow from Lemma \ref{lem:Henselization-base-change-integral}, as does the fact that 
$\Mspec(A \otimes_R R^h) = \{\mathfrak{M}_1, \dots, \mathfrak{M}_n\}$. The decomposition $(\ref{eq:finite-product-Henselian})$
and part (5) then show that
\[
A \otimes_R R^h \cong (A \otimes_R R^h)_{\mathfrak{M}_1} \times \dots \times (A \otimes_R R^h)_{\mathfrak{M}_n},
\]
and that each $(A \otimes_R R^h)_{\mathfrak{M}_i}$ is Henselian. Thus, 
$((A \otimes_R R^h)_{\mathfrak{M}_i})^h \cong (A \otimes_R R^h)_{\mathfrak{M}_i}$.
On the other hand, Lemma \ref{lem:Henselization-base-change} implies that
$((A \otimes_R R^h)_{\mathfrak{M}_i})^h \cong (A_{\mathfrak{m}_i})^h$. Thus,
$A \otimes_R R^h \cong (A_{\mathfrak{m}_1})^h \times \dots \times (A_{\mathfrak{m}_n})^h.$

\noindent(7) All the sets are finite because of
parts (1), (4) and the bijections of Lemma \ref{lem:Henselization-base-change-integral} and
Lemma \ref{lem:minimal-primes-base-change-Henselization}. It remains to check that $|\Mspec(A \otimes_R R^h)|  = |\{$minimal prime of $A\otimes_R R^h\}|$. This follows by part (5).
\end{proof}

\begin{Corollary}
\label{cor:colimit}
Let $(R, \mathfrak m)$ be a local domain that is integrally closed in its 
fraction field $K$.
Let $L$ be a finite field extension of $K$ and $A$ be the integral closure of 
$R$ in $L$. Suppose $\Mspec(A) = \{\mathfrak{m}_1,\dots,\mathfrak{m}_n\}$ (the maximal
ideals are assumed to be distinct).
\begin{enumerate}
	\item Let $\Sigma$ be the collection of finite (equivalently, finitely generated) 
	$R$-subalgebras $B$ of $A$ such that $\Frac(B) = \Frac(A) = L$ and 
	$\mathfrak{m}_i \cap B \neq \mathfrak{m}_j \cap B$, for $i \neq j$. Then $\Sigma$ is filtered
	under inclusion and 
	$$A = \colim_{B \in \Sigma} B.$$
	
	\item Let $\mathfrak{M} \in \Mspec(A \otimes_R R^h)$ and $B \in \Sigma$. If $\mathfrak{M}_B$
	is the contraction of $\mathfrak{M}$ to the subring $B \otimes_R R^h$ of $A \otimes_R R^h$,
	then the induced map on local rings 
	$$(B \otimes_R R^h)_{\mathfrak{M}_B} \rightarrow (A \otimes_R R^h)_{\mathfrak M}$$
	is injective, $(B \otimes_R R^h)_{\mathfrak{M}_B}$ is a Henselian domain, and
	$$(A \otimes_R R^h)_{\mathfrak M} = \colim_{B \in \Sigma} (B \otimes_R R^h)_{\mathfrak{M}_B}.$$
\end{enumerate}
\end{Corollary}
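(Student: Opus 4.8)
The plan is to handle (1) by a routine filtered-colimit argument, and to reduce (2) to re-deriving, for a fixed $B \in \Sigma$, the structure statement of Proposition \ref{prop:decomposition-henselization-base-change} with $B$ in place of $A$; once $B \otimes_R R^h$ is known to be a finite product of Henselian local domains, (2) falls out. For (1): since $A$ is integral over $R$, every finitely generated $R$-subalgebra of $A$ is module finite over $R$, so the two descriptions of $\Sigma$ agree. To see $\Sigma$ is nonempty I would take generators of $L/K$; multiplying each by a suitable nonzero element of $R$ puts them in $A$, giving a module-finite $R$-subalgebra $B_0 \subseteq A$ with $\Frac(B_0) = L$, and then adjoining, for each of the finitely many pairs $i \neq j$, an element of $\mathfrak m_i \setminus \mathfrak m_j$ produces a member of $\Sigma$, using that separation of the $\mathfrak m_i$'s, once achieved, persists under enlarging the subalgebra (if $B \subseteq B'$ and $\mathfrak m_i \cap B \neq \mathfrak m_j \cap B$, then $\mathfrak m_i \cap B' \neq \mathfrak m_j \cap B'$). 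The same persistence shows $\Sigma$ is filtered, since the $R$-subalgebra generated by $B_1, B_2 \in \Sigma$ again lies in $\Sigma$; and $A = \colim_{B \in \Sigma} B$ because any $a \in A$ lies in $B[a]$, and $B[a] \in \Sigma$ whenever $B \in \Sigma$.

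For (2), fix $B \in \Sigma$. As $B$ is module finite over $R$ with $\Frac(B) = L$, the ring $A$ is also the integral closure of $B$ in $\Frac(B)$, so $B \hookrightarrow A$ is an integral extension, and base change along the faithfully flat map $R \to R^h$ gives an integral extension $B \otimes_R R^h \hookrightarrow A \otimes_R R^h$. By Proposition \ref{prop:decomposition-henselization-base-change}(6), $A \otimes_R R^h \cong \prod_{i=1}^n (A_{\mathfrak m_i})^h$ is a finite product of Henselian local domains, hence reduced, hence so is its subring $B \otimes_R R^h$. The task is now to recover the conclusions of Proposition \ref{prop:decomposition-henselization-base-change} for $B$ — which cannot be quoted directly, since $B$ is in general not integrally closed. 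By Lemma \ref{lem:minimal-primes-base-change-Henselization} (applied to the integral extension of domains $R \hookrightarrow B$ with $\Frac(B) = L$ finite over $K$), $B \otimes_R R^h$ has finitely many minimal primes, each lying over $(0)$ in $R^h$, and their number equals $|\Spec(L \otimes_K \Frac(R^h))| = n$ by Lemma \ref{lem:minimal-primes-base-change-Henselization}(3) and Proposition \ref{prop:decomposition-henselization-base-change}(7). By Lemma \ref{lem:Nagata-hensel-domains} each quotient $(B \otimes_R R^h)/\mathfrak q_j$ is a Henselian local domain, so each minimal prime $\mathfrak q_j$ lies in a unique maximal ideal $\mathfrak N_j$; and $|\Mspec(B \otimes_R R^h)| = |\Mspec(B)| = n$, the first equality by Lemma \ref{lem:Henselization-base-change-integral}(3) and the second because membership of $B$ in $\Sigma$ makes the (always surjective) contraction $\Mspec(A) \to \Mspec(B)$ injective. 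Hence $\mathfrak q_j \mapsto \mathfrak N_j$ is a bijection, distinct $\mathfrak q_j$ are comaximal, and the Chinese Remainder Theorem together with reducedness yields $B \otimes_R R^h \cong \prod_{j=1}^n (B \otimes_R R^h)/\mathfrak q_j$, with $(B \otimes_R R^h)_{\mathfrak N_j} \cong (B \otimes_R R^h)/\mathfrak q_j$. I expect this replacement of the normality step of Proposition \ref{prop:decomposition-henselization-base-change} by a counting argument to be the main obstacle; note that it is exactly here that the defining condition of $\Sigma$ enters, via $|\Mspec(B)| = n$.

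With this structure in place, the three assertions of (2) are quick. Given $\mathfrak M \in \Mspec(A \otimes_R R^h)$, its contraction $\mathfrak M_B$ is a maximal ideal of $B \otimes_R R^h$ (contraction of a maximal ideal along the integral extension $B \otimes_R R^h \hookrightarrow A \otimes_R R^h$), say $\mathfrak M_B = \mathfrak N_j$; hence $(B \otimes_R R^h)_{\mathfrak M_B} \cong (B \otimes_R R^h)/\mathfrak q_j$ is a Henselian domain. By Proposition \ref{prop:decomposition-henselization-base-change}(5) the map $A \otimes_R R^h \to (A \otimes_R R^h)_{\mathfrak M}$ is the quotient by the unique minimal prime $\mathfrak q \subset \mathfrak M$, so the composite $B \otimes_R R^h \hookrightarrow A \otimes_R R^h \to (A \otimes_R R^h)_{\mathfrak M}$ has kernel $\mathfrak q \cap (B \otimes_R R^h)$, which lies over $(0)$ in $R^h$, hence is a minimal prime of $B \otimes_R R^h$ inside $\mathfrak M_B$, hence equals $\mathfrak q_j$; therefore the induced map $(B \otimes_R R^h)_{\mathfrak M_B} \cong (B \otimes_R R^h)/\mathfrak q_j \to (A \otimes_R R^h)_{\mathfrak M}$ is injective. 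Finally, from (1) we get $A \otimes_R R^h = \colim_{B \in \Sigma}(B \otimes_R R^h)$; the localizations are compatible because $\mathfrak M_B = \mathfrak M_{B'} \cap (B \otimes_R R^h)$ for $B \subseteq B'$; and since localization commutes with filtered colimits and $(A \otimes_R R^h) \setminus \mathfrak M$ is the union of the images of the sets $(B \otimes_R R^h) \setminus \mathfrak M_B$, we conclude $(A \otimes_R R^h)_{\mathfrak M} = \colim_{B \in \Sigma}(B \otimes_R R^h)_{\mathfrak M_B}$.
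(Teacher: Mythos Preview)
Your proof is correct and follows essentially the same approach as the paper. The only difference is the order of steps in part (2): the paper first obtains the product decomposition of $B \otimes_R R^h$ from its being a finite algebra over the Henselian local ring $R^h$ and then counts minimal primes to see that each factor is a domain, whereas you first match minimal and maximal primes via Lemma~\ref{lem:Nagata-hensel-domains} and the same counting argument, and then obtain the product decomposition from the Chinese Remainder Theorem.
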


\begin{proof}
(1) Since every element of $\Sigma$ is integral over $R$, finitely generated is equivalent to being module finite as an $R$-algebra.
Note that $\Sigma$ is non-empty. Indeed, since $\Frac(A) = L$ is a finite extension of $K$, one can choose a $K$-basis of $L$ consisting of elements 
$b_1,\dots,b_m \in A$. By prime avoidance, for all $i = 1, \dots, n$, choose
$a_i \in \mathfrak{m}_i$ such that $a_i$ is not contained in any of the other maximal ideals
of $A$ (here we need that $A$ is semi-local).
Then by construction, the $R$-subalgebra $R[a_1,\dots,a_n,b_1,\dots,b_m]$ of $A$
is an element of $\Sigma$. 

If $B \in \Sigma$, then any finitely generated
$B$-subalgebra $C$ of $A$ is also in $\Sigma$. Therefore if
$B_1, B_2 \in \Sigma$, then so is $B_1[B_2]$, that is, $\Sigma$ is filtered under
inclusion. Since $A$ is the filtered union of finitely generated 
$B$-subalgebras for any $B \in \Sigma$ and $\Sigma \neq \emptyset$, 
we have $A = \colim_{B \in \Sigma} B$.

\noindent(2) Fix $B \in \Sigma$. Since $B \hookrightarrow A$ is an integral extension,
each $\mathfrak{m}_i \cap B$ is a maximal ideal of $B$. Furthermore, since
every maximal ideal of $B$ is contracted from a maximal ideal of $A$, we get
\[
\Mspec(B) \coloneqq \{\mathfrak{m}_1 \cap B, \dots, \mathfrak{m}_n \cap B\}.
\]
Let $\mathfrak{M}_i \in \Spec(A \otimes_R R^h)$ be the unique prime ideal that
contracts to $\mathfrak{m}_i$. Then 
Lemma \ref{lem:Henselization-base-change-integral} shows 
that
\[
\Mspec(A\otimes_R R^h) = \{\mathfrak{M}_1,\dots,\mathfrak{M}_n\}.
\]
If $(\mathfrak{M}_i)_B$ is the contraction of $\mathfrak{M}_i$ to $B \otimes_R R^h$,
then using the integrality of the extension
$B \otimes_R R^h \hookrightarrow A \otimes_R R^h$ we conclude that
\[
\Mspec(B \otimes_R R^h) = \{(\mathfrak{M}_1)_B,\dots,(\mathfrak{M}_n)_B\}.
\]
The defining property of $\Sigma$ implies that for $i \neq j$, 
$\mathfrak{m}_i \cap B \neq \mathfrak{m}_j \cap B$. As $(\mathfrak{M}_i)_B$ lies
over $\mathfrak{m}_i \cap B$ by the commutativity of the diagram
\[
\begin{tikzcd}
A \arrow[r, ""] 
& A \otimes_R R^h  \\ 
B \arrow[r, ""] \arrow[u, hook]
&B \otimes_R R^h \arrow[u, hook],
\end{tikzcd}
\]
we have $(\mathfrak{M_i})_B \neq (\mathfrak{M_j})_B$ for $i \neq j$. In other
words, $B \otimes_R R^h$ consists of $n$ distinct maximal ideals $(\mathfrak{M}_i)_B$
for $i = 1, \dots, n$, and $\mathfrak{M}_i$ is the unique prime ideal of
$A \otimes_R R^h$ that contracts to $(\mathfrak{M}_i)_B$.

Since $B$ is a finite extension of $R$, $B \otimes_R R^h$ is a finite extension of $R^h$.
The decomposition of finite extensions of
Henselian local domains 
\cite[\href{https://stacks.math.columbia.edu/tag/04GH}{Tag 04GH}]{stacks-project} gives us 
that
\begin{equation}
\label{eq:decomp}
B \otimes_R R^h \cong (B \otimes_R R^h)_{(\mathfrak{M}_1)_B} \times \dots \times 
(B \otimes_R R^h)_{(\mathfrak{M}_n)_B},
\end{equation}
and that each 
$(B \otimes_R R^h)_{(\mathfrak{M}_i)_B}$ is a Henselian local ring. Moreover, $B \otimes_R R^h$
is a subring of $A \otimes_R R^h$, which 
is reduced because it decomposes as a finite product of
domains by part (6) of Proposition \ref{prop:decomposition-henselization-base-change}. 
Thus, $(B \otimes_R R^h)_{(\mathfrak{M}_i)_B}$ is reduced for all $i$.

Applying part (3) of Lemma 
\ref{lem:minimal-primes-base-change-Henselization} to the integral
extension $R \hookrightarrow B$ we see that the number of minimal primes
of $B \otimes_R R^h$ equals the cardinality of $\Spec(L \otimes_K \Frac(R^h))$.
But
\[
|\Spec(L \otimes_K \Frac(R^h))| = |\Mspec(A)| = n
\]
%which is $n$, the number
%of maximal ideals of $A$, 
by part (7) of Proposition
\ref{prop:decomposition-henselization-base-change} and 
%. Here we use the
the fact that $\Frac(A) = \Frac(B) = L$. Consequently,
each factor in the decomposition (\ref{eq:decomp}) has
exactly one minimal prime. Combined with reducedness, it 
follows that each 
$(B \otimes_R R^h)_{(\mathfrak{M}_i)_B}$ 
is a domain. 
%because it is a reduced ring that can have precisely one 
%of the $n$ minimal primes of $B \otimes_R R^h$. 
%Thus, we have proved that the localization
%of $B \otimes_R R^h$ at any maximal ideal is a Henselian domain.

%The unique minimal  $\mathfrak{p}_i$ that is contained 
%in $(\mathfrak{M}_i)_B$ can be realized as the kernel of the canonical map 
%$B \otimes_R R^h \rightarrow (B \otimes_R R^h)_{(\mathfrak{M}_i)_B}$,
%that is,
%$\mathfrak{p}_i(B \otimes_R R^h)_{(\mathfrak{M}_i)_B} = (0)$.

In particular, $(\mathfrak{M}_i)_B$ contains a unique minimal prime (which expands to
the zero ideal in $(B \otimes_R R^h)_{(\mathfrak{M}_i)_B}$). Using part (5) of Proposition \ref{prop:decomposition-henselization-base-change},
if $\mathfrak{P}_i$ is the unique minimal prime of $A \otimes_R R^h$ 
contained in $\mathfrak{M}_i$, then 
%the canonical map 
%$A \otimes_R R^h \rightarrow (A \otimes_R R^h)_{\mathfrak{M}_i}$
%has kernel $\mathfrak{P}_i$ and induces an isomorphism
% \[
%(A \otimes_R R^h)/\mathfrak{P}_i \xrightarrow{\simeq} 
%(A \otimes_R R^h)_{\mathfrak{M}_i}.
%\] 
$$(\mathfrak{P}_i)_B \coloneqq \mathfrak{P}_i \cap (B \otimes_R R^h)$$ 
must be the unique
minimal prime of $B \otimes_R R^h$ contained in $(\mathfrak{M}_i)_B$. 
%Note $\mathfrak{P}_i \subseteq \mathfrak{M}_i$ implies $\mathfrak{P}_i^c \subseteq (\mathfrak{M}_i)_B$. 
Indeed, by part (2) of Lemma \ref{lem:minimal-primes-base-change-Henselization}, $\mathfrak{P}_i$ contracts to $(0)$ in $R^h$. Thus $(\mathfrak{P}_i)_B$ also
contracts to $(0)$ in $R^h$. Applying part (2) of Lemma \ref{lem:minimal-primes-base-change-Henselization} again, but
this time to the integral extension $R \hookrightarrow B$, 
then shows that $(\mathfrak{P}_i)_B$ is a minimal prime of $B \otimes_R R^h$.
%Indeed, because $B \otimes_R R^h \hookrightarrow A \otimes_R R^h$ is an
%integral extension, only a minimal prime of $A \otimes_R R^h$ can lie over
%a minimal prime of $B \otimes_R R^h$ (integral extensions have
%zero dimensional fibers). Thus, if $\mathfrak{Q}$ is a minimal prime
%of $A \otimes_R R^h$ that contracts to $\mathfrak{p}_i$, then by Going-Up applied 
%to the chain 
%$$\mathfrak{p}_i \subseteq (\mathfrak{M}_i)_B,$$ 
%there exists 
%$\mathfrak{Q}' \in \Spec(A \otimes_R R^h)$ containing $\mathfrak{Q}$ such that
%$\mathfrak{Q}'$ contracts to the maximal ideal $(\mathfrak{M}_i)_B$. 
%Since $\mathfrak{M}_i$ is the unique prime ideal
%of $A \otimes_R R^h$ that lies over $(\mathfrak{M}_i)_B$, we must have 
%$\mathfrak{Q}' = \mathfrak{M}_i$, and so, $\mathfrak{Q}$ is a minimal
%prime contained in $\mathfrak{M}_i$. Again by uniqueness, we get $\mathfrak{Q} = \mathfrak{P}_i$,
%and consequently, $\mathfrak{P}_i$ contracts to $\mathfrak{p}_i$.

Using the commutative diagram
\[
\begin{tikzcd}
B \otimes_R R^h \arrow[r, hook] \arrow[d]
& A \otimes_R R^h \arrow[d] \\ (B \otimes_R R^h)_{(\mathfrak{M}_i)_B} \arrow[r, ""]
&(A \otimes_R R^h)_{\mathfrak{M}_i},
\end{tikzcd}
\]
it follows that $\mathfrak{P}_i(A \otimes_R R^h)_{\mathfrak{M}_i}$ must contract to  
$(\mathfrak{P}_i)_B(B \otimes_R R^h)_{(\mathfrak{M}_i)_B}$ in $(B \otimes_R R^h)_{(\mathfrak{M}_i)_B}$. 
Since $(A \otimes_R R^h)_{\mathfrak{M}_i}$
and $(B \otimes_R R^h)_{(\mathfrak{M}_i)_B}$ are both domains (the former ring is a domain
by Proposition
\ref{prop:decomposition-henselization-base-change}), we have 
$\mathfrak{P}_i(A \otimes_R R^h)_{\mathfrak{M}_i} = (0)$ and $(\mathfrak{P}_i)_B(B \otimes_R R^h)_{(\mathfrak{M}_i)_B} = (0)$. 
Thus, the bottom horizontal arrow
is injective.
%induces, upon taking kernels of the vertical maps, a commutative
%diagram
%\[
%\begin{tikzcd}
%(B \otimes_R R^h)/\mathfrak{p}_i \arrow[r, hook] \arrow[d, "\simeq"]
%& (A \otimes_R R^h)/\mathfrak{P}_i \arrow[d, "\simeq"] \\ (B \otimes_R R^h)_{(\mathfrak{M}_i)_B} \arrow[r, ""]
%&(A \otimes_R R^h)_{\mathfrak{M}_i}.
%\end{tikzcd}
%\]
%In this last diagram, the top horizontal map is injective because $\mathfrak{P}_i$ 
%contracts to $\mathfrak{p}_i$. 
%Therefore the bottom map of local rings must be 
%injective as well since the vertical maps are isomorphisms.

A maximal ideal $\mathfrak{M}$ of $A \otimes_R R^h$ coincides
with some $\mathfrak{M}_i$. Therefore the argument above shows that for any $B \in \Sigma$,
$(B \otimes_R R^h)_{\mathfrak{M}_B}$
is a Henselian local domain and 
the induced local map 
$$(B \otimes_R R^h)_{\mathfrak{M}_B} \rightarrow (A \otimes_R R^h)_{\mathfrak M}$$
is injective. As tensor product commutes with filtered colimits, we have 
$A \otimes_R R^h = \colim_{B \in \Sigma}
B \otimes_R R^h$
by part (1), and so, 
$(A \otimes_R R^h)_{\mathfrak M} = \colim_{B \in \Sigma} (B \otimes_R R^h)_{\mathfrak{M}_B}$. In this case the filtered colimit is actually
a filtered union because $(B \otimes_R R^h)_{\mathfrak{M}_B}$ is a
subring of $(A \otimes_R R^h)_{\mathfrak M}$, for all $B \in \Sigma$.
%Moreover, $(\mathfrak{M}_i)_B \neq (\mathfrak{M}_j)_B$ for $i \neq j$ because for all $i$,
%$(\mathfrak{M}_i)_B$ lies over $\mathfrak{m}_i \cap B$, and $\mathfrak{m}_i \cap B \neq 
%\mathfrak{m}_j \cap B$ by the defining property of $\Sigma$.
\end{proof}

\section{Henselian valuation rings and some ramification theory}
\label{sec:valuation-background}

This section discusses some
background from the ramification theory of extensions of valuations
relevant to Conjecture \ref{conj:Knaf}. We first recall how extensions of valuation rings arise
in algebraic field extensions.

\begin{Proposition}
\label{prop:extensions-valuations}
Let $L/K$ be an algebraic extension of fields. Let $V$ be a 
valuation ring of $K$ and $A$ be the integral closure of $V$ in 
$L$. Then localization $\mathfrak m \mapsto A_{\mathfrak{m}}$ induces a
bijection 
\[
\Mspec(A) \longleftrightarrow \textrm{$\{$valuation rings of $L$ that
dominate $V \}$}.
\]
In particular, if $L/K$ is finite, then there are finitely many
valuation rings of $L$ that dominate $V$.
\end{Proposition}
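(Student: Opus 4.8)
The plan is to split the asserted bijection into two standard facts and assemble them. First, I would recall the lying-over/going-up picture for the integral extension $V \hookrightarrow A$: every maximal ideal $\frak m$ of $A$ contracts to the maximal ideal $\frak m_V$ of $V$ (since only maximal ideals contract to maximal ideals along integral maps, and conversely lying-over produces at least one prime over $\frak m_V$, which is then maximal). Hence $\Mspec(A)$ is exactly the closed fiber of $\Spec(A) \to \Spec(V)$, and for each such $\frak m$ the localization $A_{\frak m}$ is a local ring dominating $V$ (the contraction of $\frak m A_{\frak m}$ to $V$ is $\frak m \cap V = \frak m_V$). This gives the map $\frak m \mapsto A_{\frak m}$ from $\Mspec(A)$ into local subrings of $L$ dominating $V$.

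Next I would show this map lands in valuation rings and is surjective onto the valuation rings dominating $V$. The standard mechanism is: a local subring $(\mathcal{O}, \frak m_{\mathcal O})$ of $L$ dominating $V$, with $\mathcal O$ integral over $V$ — equivalently, lying between $V$ and its integral closure $A$ in $L$ — is itself a valuation ring of $L$. Concretely, given any valuation ring $W$ of $L$ dominating $V$, $W$ contains $A$ because $W$ is integrally closed in $L$ (it is a local ring of the integral closure of $V$ in $L$ by \cite[Chap.\ VI, $\mathsection 8.6$, Prop.\ 6]{Bou98}, or directly: any element integral over $V \subseteq W$ lies in $W$). Then $\frak m \coloneqq \frak m_W \cap A$ is a prime of $A$ lying over $\frak m_V$, hence maximal, and one checks $W$ dominates $A_{\frak m}$; since $A_{\frak m}$ is already a local subring of $L$ and valuation rings are maximal under domination, $W = A_{\frak m}$. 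Conversely, for each maximal ideal $\frak m$ of $A$, pick by Zorn a valuation ring $W$ of $L$ dominating $A_{\frak m}$; by the previous sentence $W = A_{\frak m'}$ for a maximal ideal $\frak m'$, and since $W$ dominates $A_{\frak m}$ we get $\frak m' \subseteq \frak m$, forcing $\frak m' = \frak m$ by maximality; hence $A_{\frak m}$ is itself a valuation ring. Injectivity is immediate: distinct maximal ideals $\frak m \neq \frak m'$ give distinct localizations, since $\frak m A_{\frak m}$ contracts to $\frak m$ (and similarly for $\frak m'$), so the two local rings cannot coincide.

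For the final sentence, when $L/K$ is finite, $A$ is the integral closure of the normal domain $V$ in a finite extension of its fraction field, so its fibers over $\Spec(V)$ are finite by \cite[Chap.\ V, $\mathsection 2.3$, Cor.\ 2]{Bou98}; in particular $\Mspec(A)$, the closed fiber, is finite, and the bijection then shows there are only finitely many valuation rings of $L$ dominating $V$. (This is exactly the semi-locality input already used in Proposition \ref{prop:decomposition-henselization-base-change}(1).)

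The main obstacle is the core claim that an intermediate ring $V \subseteq \mathcal O \subseteq A$ which is local and dominates $V$ is a valuation ring — i.e.\ that localizations of $A$ at maximal ideals are valuation rings. The cleanest route is the domination-maximality characterization of valuation rings fixed in Section \ref{sec:Conventions and basic terminology}: dominate $A_{\frak m}$ by a valuation ring $W$ of $L$, show $W$ is integral over $V$ hence contained in $A$ hence of the form $A_{\frak m'}$ with $\frak m' \subseteq \frak m$, and conclude $\frak m' = \frak m$. Everything else (lying-over, contraction of localization ideals, finiteness) is routine.
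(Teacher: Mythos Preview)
Your argument has a circular gap at the crucial step. When you take a valuation ring $W$ of $L$ dominating $V$, you correctly obtain $A \subseteq W$ and that $W$ dominates $A_{\frak{m}}$ where $\frak{m} = \frak{m}_W \cap A$. But the conclusion ``$W = A_{\frak{m}}$'' does not follow from ``valuation rings are maximal under domination'': maximality of $W$ says that nothing strictly dominates $W$; it does \emph{not} preclude $W$ from strictly dominating $A_{\frak{m}}$. You would need $A_{\frak{m}}$ itself to be maximal (i.e., already a valuation ring) to force equality---and that is precisely what is at stake. Your ``Conversely'' paragraph then proves $A_{\frak{m}}$ is a valuation ring \emph{by invoking} this unjustified step (``by the previous sentence $W = A_{\frak{m}'}$''), so the two halves lean on each other and neither stands on its own. (Your summary paragraph also contains a slip: ``$W$ is integral over $V$ hence contained in $A$'' is backwards---it is $A$ that sits inside $W$, not the reverse.)

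The paper does not supply an independent argument here either: it simply cites \cite[Chap.\ VI, $\mathsection 8.6$, Prop.\ 6]{Bou98} for the bijection and invokes part (1) of Proposition~\ref{prop:decomposition-henselization-base-change} for the finiteness when $[L:K]<\infty$. The missing ingredient---that each $A_{\frak{m}}$ is a valuation ring of $L$---needs a direct verification using the valuation property of $V$ (for nonzero $x \in L$, take a relation $\sum_i a_i x^i = 0$ with $a_i \in V$ and at least one $a_j$ a unit in $V$, and use it to exhibit $x \in A_{\frak{m}}$ or $x^{-1} \in A_{\frak{m}}$). Once that is in hand, your domination argument runs cleanly in the correct order: $A_{\frak{m}}$ is a valuation ring, hence maximal under domination, hence any valuation ring $W$ dominating it must equal it.
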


\begin{proof}[Indication of proof]
For the bijection see \cite[Chap.\ VI, $\mathsection 8.6$,
Prop.\ 6]{Bou98}. If $L/K$ is finite, then 
$A$ has finitely many maximal ideals by part (1)
of Proposition 
\ref{prop:decomposition-henselization-base-change}. 
That there are finitely many valuation rings of $L$ that
dominate $V$ now follows from the bijection of this Proposition.
\end{proof}

As a consequence of the Henselian property, one can now deduce:

\begin{Corollary}
\label{cor:Henselian-valuations}
Let $V$ be a valuation ring of a field $K$. The following are
equivalent:
\begin{enumerate}
	\item $V$ is Henselian.
	\item If $L$ is an algebraic extension of $K$ and $A$ is the
	integral closure of $V$ in $L$, then $A$ is
	the unique 
	valuation ring of $L$ that dominates $V$.
\end{enumerate}
\end{Corollary}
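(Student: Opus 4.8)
The plan is to deduce both implications directly from Lemma \ref{lem:Nagata-hensel-domains}, Proposition \ref{prop:extensions-valuations}, and the basic permanence properties of Henselizations, without reproving anything from scratch. The underlying dictionary is: valuation rings of $L$ dominating $V$ correspond bijectively to maximal ideals of the integral closure $A$ of $V$ in $L$ (Proposition \ref{prop:extensions-valuations}), and $A$ being ``the unique valuation ring of $L$ dominating $V$'' should be read as: $A$ is local and $A$ itself is that valuation ring.

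First I would prove (1) $\Rightarrow$ (2). Assume $V$ is Henselian. Let $L/K$ be algebraic and $A$ the integral closure of $V$ in $L$. Since $V$ is a domain and $A$ is an integral extension of $V$ which is again a domain (it sits inside the field $L$), Lemma \ref{lem:Nagata-hensel-domains} applies: condition (2) of that lemma gives that $A$ is a local ring, and the final clause of that lemma gives that $A$ is itself Henselian. Now I claim $A$ is a valuation ring of $L$. Indeed, $A$ is integrally closed in $L$: it is the integral closure of $V$ in $L$, hence integrally closed in $L$. An integrally closed \emph{local} domain that is integral over a valuation ring $V$ of its fraction field's subfield $K$, with fraction field equal to the algebraic extension $L$, must be a valuation ring of $L$ --- this is the standard fact that local rings of the integral closure of a valuation ring in an algebraic extension are valuation rings, which is exactly the content of \cite[Chap.\ VI, $\mathsection 8.6$, Prop.\ 6]{Bou98} already cited for Proposition \ref{prop:extensions-valuations}; since $A$ is already local it coincides with its unique such localization. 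Finally, by Proposition \ref{prop:extensions-valuations} the valuation rings of $L$ dominating $V$ are in bijection with $\Mspec(A)$, which is a singleton, and the corresponding valuation ring is $A$ itself. So $A$ is the unique valuation ring of $L$ dominating $V$.

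For (2) $\Rightarrow$ (1), I would argue contrapositively, or better directly via Lemma \ref{lem:Nagata-hensel-domains}(2). Suppose the hypothesis of (2) holds; to show $V$ is Henselian it suffices, by Lemma \ref{lem:Nagata-hensel-domains}, to show that every integral extension domain $B$ of $V$ is local. Given such a $B$, let $L \coloneqq \Frac(B)$, which is an algebraic extension of $K$ since $B$ is integral over $V$. Let $A$ be the integral closure of $V$ in $L$; then $B \subseteq A$ and $A$ is integral over $B$, so $\Mspec(A) \to \Mspec(B)$ is surjective (lying over for integral extensions). By hypothesis (2), $A$ is the unique valuation ring of $L$ dominating $V$; in particular $A$ is local, so $\Mspec(A)$ is a singleton, hence $\Mspec(B)$ is a singleton and $B$ is local. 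Thus $V$ is Henselian.

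The only real subtlety --- and the step I would state most carefully --- is the identification in (1) $\Rightarrow$ (2) of the local integrally closed domain $A$ with an actual valuation ring of $L$; everything else is a formal chaining of Lemma \ref{lem:Nagata-hensel-domains} with Proposition \ref{prop:extensions-valuations}. That subtlety is not an obstacle so much as a bookkeeping point: it is exactly \cite[Chap.\ VI, $\mathsection 8.6$, Prop.\ 6]{Bou98}, which says the local rings $A_{\frak m}$ at maximal ideals of the integral closure are valuation rings of $L$ dominating $V$; when $A$ is already local this forces $A = A_{\frak m}$ to be a valuation ring. I do not expect to need the Henselization machinery of Section \ref{sec:Henselization and base change} here at all --- this corollary is purely a consequence of Nagata's characterization plus the Bourbaki correspondence.
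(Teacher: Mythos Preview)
Your proposal is correct and follows essentially the same approach as the paper: both directions are deduced from Lemma \ref{lem:Nagata-hensel-domains} together with the bijection of Proposition \ref{prop:extensions-valuations}, with the reverse implication handled by passing from an arbitrary integral extension domain $B$ to the integral closure $A$ in $\Frac(B)$ and using that $\Mspec(A)$ surjects onto $\Mspec(B)$. The only cosmetic difference is that the paper compresses your ``subtlety'' in (1) $\Rightarrow$ (2) into a single invocation of the bijection $\frak m \mapsto A_{\frak m}$, since $A$ local immediately gives $A = A_{\frak m}$ as the unique valuation ring; your observation that $A$ is Henselian is true but not used.
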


\begin{proof}
(1) $\Rightarrow$ (2) By Lemma \ref{lem:Nagata-hensel-domains}, $A$ must
be a local ring.
 By the bijection of Proposition 
\ref{prop:extensions-valuations} it follows that $A$ must be the
unique valuation ring of $L$ that dominates $V$. %Thus, (1) 
%$\Rightarrow$ (2).

Conversely, assume (2). Let $B$ be a domain that is an
integral extension of $V$. By Lemma \ref{lem:Nagata-hensel-domains} again, it suffices to show that $B$ is local. Let 
$L = \Frac(B)$. Then $L/K$ is algebraic. If $A$ is the integral 
closure of $V$ in $L$, then $B \subset A$ is integral. Since $A$ is local by (2), $B$ must also be local. Indeed, the unique 
maximal ideal of $A$ must contract to the unique maximal ideal of
$B$ because $B \subset A$ is an integral extension 
\cite[Cor.\ 5.8, Thm.\ 5.10]{AM69}.
\end{proof}

\begin{Remark}
In terms of valuations, Corollary \ref{cor:Henselian-valuations}
can be reinterpreted as saying that if $\nu$ is a valuation of 
a field $K$, then the valuation ring of $\nu$ is Henselian
if and only if for every algebraic extension $L/K$, there exists
a unique valuation 
$\omega$ of $L$ (up to equivalence) that extends $\nu$.
\end{Remark}

%Since Theorem \ref{thm:main-theorem} involves the Henselization
%of valuation rings, we recall the following result for the 
%reader's convenience. While 
The Henselization of a valuation
ring admits a purely valuation theoretic description. However, 
for the purposes of this paper,
it is more helpful to think of Henselizations 
as filtered colimits of local \'etale extensions that induce
isomorphisms on residue fields. One then has the following result.

\begin{Lemma}
\label{lem:valuation-Henselization}
Let $\nu$ be a valuation of a field $K$ with valuation ring
$\mathcal{O}_\nu$ and value group $\Gamma_\nu$. Then the 
Henselization $\mathcal{O}_\nu^h$ of $\mathcal{O}_\nu$ is 
a valuation ring whose associated valuation $\nu^h$ also has
value group $\Gamma_\nu$.
\end{Lemma}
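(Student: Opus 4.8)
The plan is to show that $\mathcal{O}_\nu^h$ is a valuation ring of $K^h := \Frac(\mathcal{O}_\nu^h)$ by exhibiting it as a directed union of valuation rings. Since $\mathcal{O}_\nu^h$ is the filtered colimit of local-\'etale $\mathcal{O}_\nu$-algebras inducing trivial residue field extensions, and since each such algebra lives inside the integral closure of $\mathcal{O}_\nu$ in the algebraic extension $\Frac(\mathcal{O}_\nu^h)/K$, I would first invoke Proposition~\ref{prop:extensions-valuations} (applied to the algebraic extension $K^h/K$) to identify the localizations of that integral closure at its maximal ideals with the valuation rings of $K^h$ dominating $\mathcal{O}_\nu$. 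The subtlety is that $\mathcal{O}_\nu^h$ is not itself such a localization — it is a colimit of module-finite pieces — so I instead argue directly: $\mathcal{O}_\nu^h$ is a local domain (Henselization of a local domain) that is integrally closed (by \cite[\href{https://stacks.math.columbia.edu/tag/06DI}{Tag 06DI}]{stacks-project}, since $\mathcal{O}_\nu$ is a valuation ring, hence integrally closed), and its fraction field $K^h$ is algebraic over $K$. An integrally closed local domain whose fraction field is algebraic over the fraction field of a valuation ring it dominates, and which is moreover a localization of the integral closure... — cleaner is to use the Henselian characterization: by Lemma~\ref{lem:Nagata-hensel-domains}, $\mathcal{O}_\nu^h$ being Henselian forces the integral closure of $\mathcal{O}_\nu$ in $K^h$ to be local, equal to $\mathcal{O}_\nu^h$'s own integral closure, and since $\mathcal{O}_\nu^h$ is already integrally closed in $K^h$, it equals that integral closure, which by Proposition~\ref{prop:extensions-valuations} is a valuation ring of $K^h$.

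**Second**, having established that $\mathcal{O}_\nu^h$ is a valuation ring, I would compute its value group. Write $\nu^h$ for the associated valuation of $K^h$, with value group $\Gamma_{\nu^h}$. The restriction of $\nu^h$ to $K$ is $\nu$ (up to equivalence), since $\mathcal{O}_\nu^h \cap K = \mathcal{O}_\nu$ (the Henselization is faithfully flat with $\mathfrak{m}_\nu \mathcal{O}_\nu^h \cap \mathcal{O}_\nu = \mathfrak{m}_\nu$, and $\mathcal{O}_\nu^h \cap K$ is a local ring dominating $\mathcal{O}_\nu$ inside $K$, hence equals $\mathcal{O}_\nu$ by maximality). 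Thus $\Gamma_\nu \hookrightarrow \Gamma_{\nu^h}$. For the reverse containment, the cited fact \cite[\href{https://stacks.math.columbia.edu/tag/0ASK}{Tag 0ASK}]{stacks-project} already asserts that Henselization does not change the value group; alternatively one argues that any local-\'etale extension $\mathcal{O}_\nu \to \mathcal{O}'$ inside $\mathcal{O}_\nu^h$ is unramified with trivial residue extension, and for extensions of valuation rings the fundamental inequality $[\Frac(\mathcal{O}'):K] \geq [\Gamma':\Gamma_\nu][\kappa':\kappa_\nu]$ combined with triviality of the residue extension and the fact that $\mathcal{O}'$ is essentially \'etale (hence $\Gamma' = \Gamma_\nu$, as ramification vanishes) gives $\Gamma' = \Gamma_\nu$; passing to the colimit yields $\Gamma_{\nu^h} = \Gamma_\nu$.

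**The main obstacle** I anticipate is the first part — justifying that $\mathcal{O}_\nu^h$ is a valuation ring rather than merely an integrally closed Henselian local domain. The key point making this work is that $K^h/K$ is algebraic (because $\mathcal{O}_\nu^h$ is a filtered colimit of local-\'etale, hence essentially finite type and unramified, extensions of $\mathcal{O}_\nu$, so every element of $K^h$ satisfies a polynomial over $K$), which is what lets Proposition~\ref{prop:extensions-valuations} apply and what distinguishes this from the general (non-valuative) situation. Once algebraicity of $K^h/K$ and integral closedness of $\mathcal{O}_\nu^h$ are in hand, the identification of $\mathcal{O}_\nu^h$ with a localization of the integral closure of $\mathcal{O}_\nu$ in $K^h$ at a maximal ideal is essentially formal: $\mathcal{O}_\nu^h$ is integral over $\mathcal{O}_\nu$? — no, it is not integral, so I would instead note that for each $x \in \mathcal{O}_\nu^h$, $x$ lies in some local-\'etale $\mathcal{O}_\nu$-subalgebra, whose integral closure in its fraction field is an integrally closed subring of $\mathcal{O}_\nu^h$; more cleanly, since $\mathcal{O}_\nu^h$ is integrally closed in $K^h$ and dominates $\mathcal{O}_\nu$, it contains the integral closure $A$ of $\mathcal{O}_\nu$ in $K^h$, and localizing $A$ at the contraction of $\mathfrak{m}_\nu^h$ gives a valuation ring $W$ of $K^h$ dominated by $\mathcal{O}_\nu^h$; by maximality of valuation rings under domination, $W = \mathcal{O}_\nu^h$. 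This is exactly the place where the characterization of valuation rings as maximal-under-domination local subrings does the real work.
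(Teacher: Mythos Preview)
Your final argument (in the ``obstacle'' paragraph) is correct and gives a valid proof, but it differs from the paper's approach. The paper's proof is essentially a one-liner citing \cite[\href{https://stacks.math.columbia.edu/tag/0ASK}{Tag 0ASK}]{stacks-project}: local-\'etale extensions of valuation rings are themselves valuation rings, and a filtered colimit of valuation rings is a valuation ring; since $\mathcal{O}_\nu^h$ is such a colimit, it is a valuation ring, and the same reference handles the value group. Your route instead leverages the paper's Proposition~\ref{prop:extensions-valuations}: you observe that $\mathcal{O}_\nu^h$ is integrally closed in $K^h$ (hence contains the integral closure $A$ of $\mathcal{O}_\nu$ in the algebraic extension $K^h$), localize $A$ at the contraction of $\mathfrak{m}_\nu^h$ to obtain a valuation ring $W$ of $K^h$ dominated by the local ring $\mathcal{O}_\nu^h$, and conclude $W = \mathcal{O}_\nu^h$ by maximality under domination. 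The paper's approach is shorter and modular; yours ties the lemma to the extension theory developed elsewhere in the paper, at the cost of more moving parts. Do note that your earlier ``cleaner'' attempt---claiming that $\mathcal{O}_\nu^h$ \emph{equals} the integral closure of $\mathcal{O}_\nu$ in $K^h$---is indeed wrong, as you yourself flag ($\mathcal{O}_\nu^h$ is ind-\'etale, not integral, over $\mathcal{O}_\nu$); it is the domination argument that actually works.
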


\begin{proof}
See \cite[\href{https://stacks.math.columbia.edu/tag/0ASK}{Tag 0ASK}]{stacks-project}. The main points are that local \'etale
extensions of valuation rings are valuation rings and a filtered
colimit of valuation rings is a valuation ring.
\end{proof}

\begin{Notation}
The fraction field of $\mathcal{O}^h_\nu$ will be denoted by $K^h$.
Thus, $\nu^h$ is a valuation of $K^h$ whose valuation ring is
$\mathcal{O}^h_\nu$.
\end{Notation}

We record a descent result that we will need in the proof of
Theorem \ref{thm:main-theorem}. 

%While faithfully
%flat descent of the property of being a valuation ring
%is well-known, the a priori weaker condition of
%being cyclically pure is sufficient for descent of the valuation
%property.

\begin{Lemma}
\label{lem:descent}
Let $\varphi: V \rightarrow W$ be a ring map and
$W$ be a valuation ring. The following are equivalent:
\begin{enumerate}
	\item $V$ is a valuation ring and $\varphi$ is an injective 
	local map.
	\item $\varphi$ is faithfully flat.
	\item $\varphi$ is cyclically pure, that is, for all ideals
	$I$ of $V$, the induced map $V/I \rightarrow W/IW$ is 
	injective.
\end{enumerate}
\end{Lemma}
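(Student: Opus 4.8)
The plan is to prove the cycle of implications $(1) \Rightarrow (2) \Rightarrow (3) \Rightarrow (1)$, so that each arrow is used exactly once. The first two are short. For $(1) \Rightarrow (2)$: assuming $V$ is a valuation ring and $\varphi$ is an injective local map, I would observe that $W$ is a domain containing $V$, hence a torsion-free $V$-module, and that torsion-free modules over a valuation ring are flat; thus $\varphi$ is flat. Since $\varphi$ is local we have $\mathfrak{m}_V W \subseteq \mathfrak{m}_W \subsetneq W$, so the closed fibre $W/\mathfrak{m}_V W$ is nonzero, and a flat algebra over a local ring whose closed fibre is nonzero is faithfully flat (by Going-Down, as the map on spectra is then surjective). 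For $(2) \Rightarrow (3)$: faithful flatness of $\varphi$ gives that $M \to M \otimes_V W$ is injective for every $V$-module $M$; taking $M = V/I$ and using the identification $(V/I) \otimes_V W = W/IW$ yields cyclic purity.

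The substance lies in $(3) \Rightarrow (1)$. Taking $I = (0)$ in the definition of cyclic purity shows $\varphi$ is injective, so I may regard $V$ as a subring of $W$, and hence of the field $F \coloneqq \Frac(W)$; set $K \coloneqq \Frac(V) \subseteq F$. To see $V$ is a valuation ring of $K$, take $x \in K^\times$ and write $x = a/b$ with $a, b \in V \setminus \{0\}$. Since $W$ is a valuation ring, either $a/b \in W$ or $b/a \in W$. In the first case $a \in bW$, so the image of $a$ in $W/bW$ is zero, whence cyclic purity applied to the ideal $I = bV$ forces the image of $a$ in $V/bV$ to be zero, i.e.\ $a \in bV$ and $x \in V$; the second case gives $x^{-1} \in V$ symmetrically. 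So $V$ is a valuation ring. It remains to check that $\varphi$ is local, that is, $\mathfrak{m}_V \subseteq \mathfrak{m}_W$: if some $x \in \mathfrak{m}_V$ were a unit of $W$, then $1 = x \cdot x^{-1} \in xW$, and cyclic purity applied to $I = xV$ would give $1 \in xV$, contradicting $x \in \mathfrak{m}_V$.

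The implication $(3) \Rightarrow (1)$ looks like it should be the hard direction, but in fact it dissolves into two short applications of cyclic purity to principal ideals once one spots that the ratio $a/b$ should be tested against the ideal $bV$; the only place I expect to cite external input is $(1) \Rightarrow (2)$, namely that torsion-free modules over a valuation ring are flat and the standard characterization of faithful flatness over a local ring via the closed fibre. So I anticipate no genuine obstacle, only the need to phrase the principal-ideal trick cleanly.
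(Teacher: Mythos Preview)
Your proposal is correct and follows essentially the same route as the paper: the same cycle $(1)\Rightarrow(2)\Rightarrow(3)\Rightarrow(1)$, the same use of ``torsion-free over a valuation ring is flat'' for $(1)\Rightarrow(2)$, and the same principal-ideal trick for $(3)\Rightarrow(1)$. The only cosmetic difference is that the paper phrases the valuation-ring check via the total ordering of principal ideals (for $x,y\in V$ one has $xV\subseteq yV$ or $yV\subseteq xV$, deduced from $\varphi^{-1}(IW)=I$), whereas you phrase it via the equivalent ``$x\in V$ or $x^{-1}\in V$'' criterion; and for locality the paper applies cyclic purity directly to $I=\mathfrak{m}_V$ rather than to a principal ideal, but both arguments are immediate.
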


\begin{proof}
Assume (1). If $\varphi$ is injective, then $W$, being a domain, is a 
torsion-free $V$-module, hence flat \cite[Chap.\ VI, $\mathsection 3.6$, Lem.\ 1]{Bou98}. Since $\varphi$ is local, $\varphi$
is faithfully flat. Thus, (1) $\Rightarrow$ (2). Furthermore,
(2) $\Rightarrow$ (3) is a property of faithfully flat
maps; see \cite[Chap. I, $\mathsection 3.5$, Prop. 9]{Bou98}.

Assume (3). Taking $I = (0)$, we see that $\varphi$ is 
injective. Thus, $V$ is a domain because $W$ is. To show that
$V$ is a valuation ring, it is enough to show that for all 
$x, y \in V$, $xV \subseteq yV$ or $yV \subseteq xV$. Since $W$ is a valuation ring,
we must have $xW \subseteq yW$ or $yW \subseteq xW$. Cyclic purity of $\varphi$ implies that $\varphi^{-1}(IW) = I$,
for any ideal $I$ of $V$. Thus, if $xW \subseteq yW$, then 
$xV = \varphi^{-1}(xW) \subseteq \varphi^{-1}(yW) = yV$. Similarly, 
$yV \subseteq xV$ if $yW \subseteq xW$. Finally, $\varphi$ is 
local because if $\mathfrak{m}_V$ is the maximal ideal of the valuation ring
$V$, then injectivity of $V/\mathfrak{m}_V \rightarrow W/\mathfrak{m}_VW$ shows $\mathfrak{m}_VW \neq W$.
\end{proof}

%We now introduce some basic invariants from the ramification
%theory of extensions of valuations since 
Conjecture \ref{conj:Knaf} relates essential finite generation
of extensions of valuation rings to fundamental invariants from
the ramification theory of extensions of valuations. We now 
briefly introduce these invariants. 
Let $L/K$ be a field extension, 
$\omega$ be a valuation of $L$ with value group $\Gamma_\omega$ and $\nu$ be its restriction
to $K$ with value group $\Gamma_\nu$. 
%Let $\Gamma_\omega$ (resp. $\Gamma_\nu$) denote the
%value group of 
%$\omega$ (resp. $\nu$), and $(\mathcal{O}_\omega, \mathfrak{m}_\omega, \kappa_\omega)$
%(resp. $(\mathcal{O}_\nu, \mathfrak{m}_\nu, \kappa_\nu)$) denote the valuation ring 
%of $\omega$ (resp. $\nu$). 
Inclusion induces a local homomorphism of the corresponding
valuation rings
\[
(\mathcal{O}_\nu, \mathfrak{m}_\nu, \kappa_\nu) \hookrightarrow (\mathcal{O}_\omega, \mathfrak{m}_\omega, \kappa_\omega).
\]
Note that $\Gamma_\nu$ is a subgroup of $\Gamma_\omega$ and $\kappa_\nu$ is a subfield
of $\kappa_\omega$. 

Fix a finite field extenion $L/K$. One has the fundamental 
inequality \cite[Chap.\ VI, $\mathsection 8.1$, Lem.\ 2]{Bou98}
\begin{equation}
\label{eq:ram-rel}
[\Gamma_\omega:\Gamma_\nu][\kappa_\omega:\kappa_\nu] \leq [L:K].
\end{equation}
In particular, $[\Gamma_\omega:\Gamma_\nu]$ and 
$[\kappa_\omega:\kappa_\nu]$ are finite invariants of 
$\omega/\nu$. This leads to the following definition.
%We let 
%\[
%\Gamma_{\omega, \geq 0} = \{x \in \Gamma_\omega: x \geq 0\} \hspace{2mm}\textrm{and}
%\hspace{2mm} \Gamma_{\omega, > 0} = \{x \in \Gamma_\omega: x > 0\}.
%\]
%The notations $\Gamma_{\nu, \geq 0}$ and $\Gamma_{\nu, > 0}$ are similarly defined.
\begin{Definition}
\label{def:ram-res}
Suppose $L/K$ is a finite extension and consider the extension of valuations $\omega/\nu$.
\begin{itemize}
	\item[(a)] The \emph{ramification index} of $\omega/\nu$, denoted $e(\omega|\nu)$,
is $[\Gamma_\omega : \Gamma_\nu]$.
	\item[(b)] The \emph{inertia index} of $\omega/\nu$, denoted $f(\omega|\nu)$,
is $[\kappa_\omega:\kappa_\nu]$.
	\item[(c)] The \emph{initial index} of $\omega/\nu$, denoted $\epsilon(\omega|\nu)$,
	is the cardinality of the set $\{x \in \Gamma_{\omega,\geq0}: x < \Gamma_{\nu, > 0}\}$.
\end{itemize}
\end{Definition}

The finiteness of the initial index follows from the 
inequality
\[
\epsilon(\omega|\nu) \leq e(\omega|\nu),
\]
which holds because if $x, y \in \Gamma_{\omega, \geq 0}$ are 
distinct elements such that $x, y < \Gamma_{\nu, > 0}$, then
$x + \Gamma_\nu \neq y + \Gamma_\nu$ in $\Gamma_\omega/\Gamma_\nu$. 
Indeed, assume without loss of generality that
$0 \leq x < y$. Then $y - x \in \Gamma_{\omega, > 0}$ and
$y - x \leq y < \Gamma_{\nu, > 0}$, that is, $y - x \notin 
\Gamma_\nu$.  

%We now introduce the notion of defect of an extension of 
%valuations. 
By Lemma \ref{lem:valuation-Henselization},
if $\omega/\nu$ is an extension of valuations, then for the 
extension of Henselizations $\omega^h/\nu^h$, we have 
$e(\omega|\nu) = e(\omega^h|\nu^h), 
f(\omega|\nu) = f(\omega^h|\nu^h)$ and $\epsilon(\omega|\nu) = 
\epsilon(\omega^h|\nu^h)$ because Henselizations 
do not alter value groups and residue fields. 
In addition, one can use the isomorphism of part (6) of 
Proposition \ref{prop:decomposition-henselization-base-change} 
and Proposition \ref{prop:extensions-valuations} 
to conclude that $L \otimes_K K^h$ is a finite product of fields,
one of which coincides with $L^h$, the fraction field of 
$\mathcal{O}^h_\omega$. Thus, 
\begin{equation}
\label{eq:henselization-degree}
[L^h:K^h] \leq [L:K] < \infty.
\end{equation}
Using these observations we recall the notion of the defect of $\omega/\nu$ \cite{Kuh11},
%,
which measures to what extent equality fails in (\ref{eq:ram-rel}), at least when $\omega$ is the unique extension of $\nu$ to $L$.

%Furthermore, 
%even if $\omega$ is not the unique extension of $\nu$, 

\begin{Definition}
\label{def:defect}
Let $L/K$ be a finite field extension and $\nu$ be a valuation of $K$. If $\omega$ is the unique extension of
$\nu$ to $L$, then the \emph{defect of $\omega/\nu$},
denoted $d(\omega|\nu)$, is defined to be
\[
d(\omega|\nu) = \frac{[L:K]}{e(\omega|\nu)f(\omega|\nu)}.
\]
If the extension of valuations $\omega/\nu$ is not necessarily
unique, the \emph{defect of $\omega/\nu$} is defined to be the 
defect of the extension of Henselizations $\omega^h/\nu^h$, that 
is,
$$d(\omega|\nu) = \frac{[L^h:K^h]}{e(\omega|\nu)f(\omega|\nu)}.$$
We say $\omega/\nu$ is \emph{defectless} if $d(\omega|\nu) = 1$,
that is, if $[L^h:K^h] = e(\omega|\nu)f(\omega|\nu)$.
\end{Definition}

\begin{Remark} 
{\*}
\begin{enumerate}
	\item If $L/K$ is a finite extension, then $\omega^h$ is 
	the unique extension of $\nu^h$ to $L^h$ by Corollary 
	\ref{cor:Henselian-valuations} and (\ref{eq:henselization-degree}). Thus, the definition of the defect of an extension of
	valuations that is not necessarily unique in terms of 
	the defect of the extension of henselizations makes sense.

%The definition of the defect of 
	%an extension of valuations $\omega/	\nu$ that is not necessarily unique in terms of that of the 
	%extension $\omega^h/\nu^h$ makes sense because by 
	%Corollary \ref{cor:Henselian-valuations}, $\omega^h$
	%is the unique extension of $\nu^h$ to $L^h$.

	\item If $\omega$ is the unique extension $\nu$ to $L$, then
	$d(\omega|\nu) = d(\omega^h|\nu^h)$. Thus, the two 
	notions of defect are consistent for unique
	extensions of valuations. 
	The only thing we 
	need to check is that $[L^h:K^h] = [L:K]$. By Proposition
	\ref{prop:extensions-valuations} and uniqueness of 
	the extension $\omega/\nu$, 
	the integral closure of 
	$\mathcal{O}_\nu$ in $L$ must be $\mathcal{O}_\omega$. Then by 
	part (6) of Proposition \ref{prop:decomposition-henselization-base-change} applied to $R = \mathcal{O}_\nu$ and 
	$A = \mathcal{O}_\omega$, we get
	$$ 
	\mathcal{O}_\omega \otimes_{\mathcal{O}_\nu} \mathcal{O}_\nu^h
	\cong \mathcal{O}^h_\omega.	
	$$
	Consequently,
	$L \otimes_K K^h \cong L^h$, and so, $[L^h:K^h] = [L:K]$. 
	%because $L \otimes_K K^h$ is a $K^h$-vector space of
	%dimension $[L:K]$.
	
	\item If $\kappa_\nu$ has characteristic $0$, 
	then we always have $d(\omega|\nu) = 1$, and if $\kappa_\nu$ has characteristic $p > 0$,
	then $d(\omega|\nu) = p^n$, for some integer $n \geq 0$ \cite[Pg. 280--281]{Kuh11}. 
	Thus, the 
	notion of defect is only interesting in residue characteristic 
	$p > 0$, that is, when $\mathcal{O}_\nu$ has prime or mixed characteristics. 
	Furthermore, $d(\omega|\nu)$ is always a positive integer.

	\item Rephrased in terms of defect, Conjecture \ref{conj:Knaf} 	asserts
	that $\mathcal{O}_\omega$ is essentially of finite type over 
	$\mathcal{O}_\nu$ if and only if $\omega/\nu$ is defectless and
	$e(\omega|\nu) = \epsilon(\omega|\nu)$.
\end{enumerate}
\end{Remark}

\begin{Example}
Let $K$ be a field of characteristic $p > 0$ for which $[K:K^p]< \infty$ and $\nu$ be a 
valuation of $K$. If $\nu^p$ denotes the restriction of $\nu$
to the subfield $K^p$ of $K$, then using pure inseparability of
the extension $K/K^p$ one can verify that $\nu$ is the unique
extension of $\nu^p$ to $K$. Then
$$d(\nu|\nu^p) = \frac{[K:K^p]}
{[\Gamma_\nu:p\Gamma_\nu][\kappa_\nu:\kappa^p_\nu]}.$$
The defect of $\nu/\nu^p$ controls interesting properties of
$\nu$. For example, it is shown in \cite[Proof of 
Thm.\ 5.1]{DS16} (see also \cite[Cor.\ IV.23]{Dat18}) that if 
$K$ is a function field of a variety over a ground field $k$, 
and $\nu$ is a valuation of $K/k$, 
then $\nu/\nu^p$ is defectless if and only if $\nu$ is an Abhyankar valuation of $K/k$.
\end{Example}

\section{Proof of Theorem \ref{thm:main-theorem}}
\label{sec:proof}

We recall the statement of Theorem \ref{thm:main-theorem} for the reader's 
convenience.

\begin{customthm}{\ref{thm:main-theorem}}
%\label{thm:main-theorem}
Let $L/K$ be a finite field extension, $\omega$ be a valuation of 
$L$ with valuation ring $(\mathcal{O}_\omega, \mathfrak{m}_\omega, \kappa_\omega)$ 
and value group $\Gamma_\omega$, and $\nu$ be the restriction of $\omega$ to $K$
with valuation ring $(\mathcal{O}_\nu, \mathfrak{m}_\nu, \kappa_\nu)$
and value group $\Gamma_\nu$. 
Let $\mathcal{O}^h_\omega$ (resp. $\mathcal{O}^h_\nu$) be the Henselization of $\mathcal{O}_\omega$ (resp. $\mathcal{O}_\nu$)
with fraction field $L^h$ (resp. $K^h$). 
Then the following are equivalent.
\begin{enumerate}
	\item[(i)] $\mathcal{O}_\omega$ is essentially of finite
	presentation over $\mathcal{O}_\nu$.
	\item[(ii)] $\mathcal{O}_\omega$ is essentially of finite type
	over $\mathcal{O}_\nu$.
	\item[(iii)] $[L^h:K^h] = [\Gamma_\omega:\Gamma_\nu][\kappa_\omega:\kappa_\nu]$ and $\epsilon(\omega/\nu) = [\Gamma_\omega:\Gamma_\nu]$.
	\item[(iv)] $\mathcal{O}^h_\omega$ is essentially of finite
	type over $\mathcal{O}^h_\nu$.
	\item[(v)] $\mathcal{O}^h_\omega$ is a module finite 
	$\mathcal{O}^h_\nu$-algebra.
\end{enumerate}
\end{customthm}

%Before we embark on a proof of Theorem \ref{thm:main-theorem},
%we prove a preliminary Lemma.

%\begin{Lemma}
%\label{lem:eft-integral}
%Let $\varphi: R \rightarrow A$ be an integral ring map. If 
%$A$ is essentially of finite type over $R$, then $\varphi$ is 
%finite.
%\end{Lemma}

%\begin{proof}
%Choose a finite type $R$-algebra $B$ and a multiplicative
%set $S$ of $B$ such that $A = S^{-1}B$. Let $f: B \rightarrow A$
%be the associated localization map. Replacing $B$ by $B/\ker(f)$
%and $S$ by its image in $B/\ker(f)$,
%we may assume without loss of generality that $B$ is a subring of 
%$A$. Then  $B \subset A$ is an
%integral extension because $\varphi$ is integral. 
%The only way 
%$B \rightarrow A = S^{-1}B$ induces a surjection on $\Spec$ is if
%$S$ does not intersect any prime ideal of $B$. But this means
%every element of $S$ is a unit, and so, $A = S^{-1}B \cong B$.
%Thus, $A$ is of finite type over $R$, and so, $\varphi$
%is finite.
%\end{proof}

\begin{proof}%[Proof of Theorem \ref{thm:main-theorem}]
(i) $\Rightarrow$ (ii) is clear, and (ii) $\Rightarrow$ (iii),
(iv) $\Rightarrow$ (iii) 
were established by Knaf \cite[Thm.\ 4.1]{CN19} (see also 
Remark \ref{rem:ZMT}). %for an argument using Zariski's Main Theorem). 
Since 
$\omega^h$ is the unique extension of $\nu^h$ to $L^h$ by 
Corollary \ref{cor:Henselian-valuations}, the implication (iii) $\Rightarrow$ (v)
 follows by \cite[Cor.\ 2.2]{CN19} because $\mathcal{O}^h_\omega$ is the
integral closure of $\mathcal{O}^h_\nu$ in $L^h$. Furthermore, clearly (v) $\Rightarrow$
(iv). %because a module-finite algebra is of finite type.
%is a consequence of Lemma \ref{lem:eft-integral} because
%$\mathcal{O}_\omega^h$ is the integral closure of 
%$\mathcal{O}_\nu^h$ in $L^h$ 
%by Corollary \ref{cor:Henselian-valuations}.
 
It remains to show that (v) $\Rightarrow$ (i). Let $A$ be the 
integral closure of $\mathcal{O}_\nu$ in $L$. By 
Proposition \ref{prop:extensions-valuations}, let $\mathfrak{m}$
be the unique maximal ideal of $A$ such that
\[
\mathcal{O}_\omega = A_{\mathfrak{m}}.
\]
Let $\Sigma$ be the collection of finite $\mathcal{O}_\nu$-subalgebras $B$
of $A$ as in Corollary \ref{cor:colimit}, and we let 
\[
\textrm{$\mathfrak{m}_B \coloneqq$ the maximal ideal $\mathfrak{m} \cap B$ of $B$.}
\] 
Then for all
$B \in \Sigma$,
\[
\textrm{$A_{\mathfrak m}$ dominates 
$B_{\mathfrak{m}_B}$ and $B_{\mathfrak{m}_B}$ dominates $\mathcal{O}_\nu$.}
\]
By Lemma
\ref{lem:Henselization-base-change-integral}, let 
\[
\textrm{$\mathfrak{M} \coloneqq$
the unique prime (equivalently, maximal) ideal of 
$A \otimes_{\mathcal{O}_\nu} \mathcal{O}^h_\nu$ that contracts
to $\mathfrak{m}$}.
\] 
For all $B \in \Sigma$, let $\mathfrak{M}_B$ denote
the contraction of $\mathfrak{M}$ to the 
$\mathcal{O}^h_\nu$-subalgebra 
$B \otimes_{\mathcal{O}_\nu} \mathcal{O}^h_\nu$ of 
$A \otimes_{\mathcal{O}_\nu} \mathcal{O}^h_\nu$ (it is
a subalgebra by flatness of $\mathcal{O}^h_\nu$). Then
by the commutativity of the diagram
\[
\begin{tikzcd}
A \arrow[r, ""] 
& A \otimes_{\mathcal{O}_\nu} {\mathcal{O}^h_\nu}  \\ B \arrow[r, ""]\arrow[u, hook]
&B \otimes_{\mathcal{O}_\nu} \mathcal{O}^h_\nu \arrow[u, hook],
\end{tikzcd}
\]
and Lemma \ref{lem:Henselization-base-change-integral} again,
 $\mathfrak{M}_B$ is the unique
prime (equivalently, maximal) ideal of $B \otimes_{\mathcal{O}_\nu} \mathcal{O}^h_\nu$
that contracts to the maximal ideal $\mathfrak{m}_B$ of $B$.

By Corollary \ref{cor:colimit}, for all 
$B \in \Sigma$, 
\[
(B \otimes_{\mathcal{O}_\nu} \mathcal{O}^h_\nu)_{\mathfrak{M}_B}
\]
is a $\mathcal{O}^h_\nu$-subalgebra of 
\[
(A \otimes_{\mathcal{O}_\nu} \mathcal{O}^h_\nu)_{\mathfrak M},
\]
and 
\[
(A \otimes_{\mathcal{O}_\nu} \mathcal{O}^h_\nu)_{\mathfrak M} =
\colim_{B \in \Sigma} (B \otimes_{\mathcal{O}_\nu} \mathcal{O}^h_\nu)_{\mathfrak{M}_B}.
\]
Note that the filtered colimit is a filtered union. By part (6) of Proposition \ref{prop:decomposition-henselization-base-change}, we have that 
\[
(A \otimes_{\mathcal{O}_\nu} \mathcal{O}^h_\nu)_{\mathfrak M} \cong
(A_{\mathfrak m})^h = \mathcal{O}^h_\omega.
\]
Since $\mathcal{O}^h_\omega$ is a module-finite 
$\mathcal{O}^h_\nu$-algebra by the hypothesis of (v), 
we can find $B \in \Sigma$ such that
\[
(A \otimes_{\mathcal{O}_\nu} \mathcal{O}^h_\nu)_{\mathfrak M} = 
(B \otimes_{\mathcal{O}_\nu} \mathcal{O}^h_\nu)_{\mathfrak{M}_B}.
\]
Therefore $(B \otimes_{\mathcal{O}_\nu} \mathcal{O}^h_\nu)_{\mathfrak{M}_B}$ is a Henselian valuation ring, and  by
part (2) of Lemma \ref{lem:Henselization-base-change}, we conclude
\[
(B_{\mathfrak{m}_B})^h \cong ((B \otimes_{\mathcal{O}_\nu} \mathcal{O}_\nu^h)_{\mathfrak{M}_B})^h = 
(B \otimes_{\mathcal{O}_\nu} \mathcal{O}_\nu^h)_{\mathfrak{M}_B}.
\]
In other words, $(B_{\mathfrak{m}_B})^h$ is a valuation ring, so by
descent (Lemma \ref{lem:descent}), $B_{\mathfrak{m}_B}$ is a 
valuation ring as well. By the definition of the collection 
$\Sigma$, we have 
\[
\Frac(B_{\mathfrak{m}_B}) = \Frac(B) = L,
\]
that is, $B_{\mathfrak{m}_B}$ is a valuation ring of $L$. Since $\mathcal{O}_\omega = A_{\mathfrak{m}}$ is also a valuation ring
of $L$ that dominates $B_{\mathfrak{m}_B}$, we 
must have
\[
\mathcal{O}_\omega = B_{\mathfrak{m}_B}
\]
because valuation rings are maximal with respect to domination of local rings. Thus, 
$\mathcal{O}_\omega$ is the localization of the finite 
$\mathcal{O}_\nu$-algebra $B$. But $B$ is $\mathcal{O}_\nu$-flat
since it is a torsion-free $\mathcal{O}_\nu$-module 
\cite[Chap.\ VI, $\mathsection 3.6$, Lem.\ 1]{Bou98}. Therefore,
$B$ is a finitely presented $\mathcal{O}_\nu$-algebra by 
Lemma \ref{lem:finite-pres}. This completes the proof of (v) 
$\Rightarrow$ (i), hence also of the Theorem.
\end{proof}

The proof of Theorem \ref{thm:main-theorem} establishes
the stronger result that if conditions (1) and (2) of Conjecture
\ref{conj:Knaf} hold, then $\mathcal{O}_\omega$ is the localization
of a finite $\mathcal{O}_\nu$-algebra $B$ contained in
the integral closure of $\mathcal{O}_\nu$ in $L$. Since a finitely
generated torsion-free module over a valuation ring is free, $B$ is 
a free $\mathcal{O}_\nu$-module of finite rank.

\begin{Remark}
\label{rem:ZMT}
One can prove (i) $\Rightarrow$ (v) (or, (ii) $\Rightarrow$ (v))
using Zariski's Main Theorem. %This is a
%variation of the argument in \cite[Thm.\ 4.1]{CN19}. 
Suppose $\mathcal{O}_\omega$ 
is the localization of a finite type 
$\mathcal{O}_\nu$-algebra $B$ at a prime ideal $\mathfrak{p}$. Then
\[
[\kappa(\mathfrak{p}):\kappa_\nu] = [\kappa_\omega:\kappa_\nu]
\leq [L:K] < \infty,
\]
where the first inequality follows from (\ref{eq:ram-rel}).
Moreover, 
\[
\dim(B_{\mathfrak p}/\mathfrak{m}_\nu B_{\mathfrak p}) = 
\dim(\mathcal{O}_\omega/\mathfrak{m}_\nu \mathcal{O}_\omega) = 0
\]
because $\mathfrak{m}_\omega$ is the only prime ideal of 
$\mathcal{O}_\omega$ that contracts to $\mathfrak{m}_\nu$ (if not,
a non-maximal prime of $\mathcal{O}_\omega$ that contracts to
$\mathfrak{m}_\nu$ will give a non-maximal prime of the integral 
closure $A$ of $\mathcal{O}_\nu$ in $L$ that contracts to 
the maximal ideal $\mathfrak{m}_\nu$). Thus,
$B$ is quasi-finite at $\mathfrak p$ by part (6) of
\cite[\href{https://stacks.math.columbia.edu/tag/00PK}{Tag 00PK}]{stacks-project}. Then Zariski's Main Theorem 
\cite[\href{https://stacks.math.columbia.edu/tag/00QB}{Tag 00QB}]{stacks-project} implies that
there exists a finite $\mathcal{O}_\nu$-subalgebra $B'$ of $B$
such that $\mathcal{O}_\omega$ is a localization of $B'$ at a 
maximal ideal $\mathfrak q$ of $B'$ ($\mathfrak{q}$ is maximal
because it contracts to $\mathfrak{m}_\nu$). By 
Lemma \ref{lem:Henselization-base-change}, $\mathcal{O}_\omega^h
 = (B')_\mathfrak{q}^h$
is the Henselization of $(B' \otimes_{\mathcal{O}_\nu} \mathcal{O}^h_\nu)_{\mathfrak Q}$,
where $\mathfrak{Q}$ is the unique prime 
ideal of $B' \otimes_{\mathcal{O}_\nu} \mathcal{O}^h_\nu$ that
contracts to $\mathfrak{q}$. But $\mathfrak{q}$ is maximal, so
$\mathfrak{Q}$ is a maximal ideal as well by Lemma \ref{lem:Henselization-base-change-integral}. Since 
$B' \otimes_{\mathcal{O}_\nu} \mathcal{O}_\nu^h$ is a finite
$\mathcal{O}_\nu^h$-algebra, it decomposes as a finite product of
finite Henselian local rings 
\cite[\href{https://stacks.math.columbia.edu/tag/04GG}{Tag 04GG}]{stacks-project}. Then $(B' \otimes_{\mathcal{O}_\nu} \mathcal{O}^h_\nu)_{\mathfrak Q}$ 
must coincide with one these local factors, that is,
$(B' \otimes_{\mathcal{O}_\nu} \mathcal{O}^h_\nu)_{\mathfrak Q}$  is
Henselian and finite. Consequently, 
$\mathcal{O}^h_\omega \cong ((B' \otimes_{\mathcal{O}_\nu} \mathcal{O}^h_\nu)_{\mathfrak Q})^h = (B' \otimes_{\mathcal{O}_\nu} \mathcal{O}^h_\nu)_{\mathfrak Q}$ is a finite
$\mathcal{O}^h_\nu$-algebra, proving (v). 
Once we know that $\mathcal{O}^h_\omega$
is a module-finite $\mathcal{O}^h_\nu$ algebra, part (iii) of 
Theorem \ref{thm:main-theorem} now follows by \cite[Chap.\ VI,
$\mathsection 8.5$, Thm.\ 2]{Bou98}. This gives a different proof
of \cite[Thm.\ 4.1]{CN19}.%, albeit with the more advanced machinery
%of Zariski's Main Theorem.
\end{Remark}

\bibliographystyle{amsalpha}
\footnotesize
\end{document}